\newtheorem{dummy}{Dummy}[section]
\newtheorem{prop}[dummy]{Proposition}
\theoremstyle{definition}
\newtheorem{theorem}[dummy]{Theorem}
\newtheorem{lem}[dummy]{Lemma}
\newtheorem{cor}[dummy]{Corollary} 
\theoremstyle{definition}
\newtheorem{dfn}[dummy]{Definition}
\newtheorem{rem}[dummy]{Remark}
\newtheorem{ntz}[dummy]{Notation}
\theoremstyle{remark}
\newcommand{\N}{\mathbb{N}}
\newcommand{\Z}{\mathbb{Z}}
\newcommand{\lip}{\mathrm{{Lip\,}}}
\newcommand{\lf}{\mathrm{{lf}}}
\newcommand{\Lip}{\mathrm{{Lip\,}}}
\newcommand{\diam}{\mathrm{{diam}}}
\newcommand{\isom}{\mathrm{Isom}}
\newcommand{\smear}{\mathrm{smear}}
\newcommand{\st}{\mathrm{pst}}
\newcommand{\pst}{\mathrm{pst}}
\renewcommand{\sp}{\mathrm{pst}}
\renewcommand{\:}{\mathrm{\,:\,}}
\newcommand{\id}{\mathrm{\,Id\,}}
\newcommand{\supp}{\mathrm{\,Supp\,}}
\newcommand{\todo}[1]{\vspace{5mm}\par\noindent
\framebox{\begin{minipage}[c]{0.95 \textwidth} \tt #1
\end{minipage}} \vspace{5mm} \par}
\newcommand{\R} {\ensuremath {\mathbb{R}}}
\newcommand{\sm} {\ensuremath {{\rm sm}}}
\newcommand{\vol} {{\rm Vol}\,}
\newcommand{\dvol} {{\rm dVol}\,}
\author{Federico Franceschini}
\address{Dipartimento di Matematica \\
Universit\`a di Pisa \\
Largo B.~Pontecorvo 5 \\
56127 Pisa, Italy}
\email{franceschini@dm.unipi.it}
\title{Proportionality Principle for the Lipschitz simplicial volume}
\keywords{Proportionality Principle, simplicial volume, Lipschitz chains, non-compact manifolds, straightening, measure homology.}
\thanks{}
\begin{document}

\maketitle

\begin{abstract} We prove the Proportionality Principle for the Lipschitz simplicial volume without any restriction on curvature. Our argument is based on the construction of a suitable \emph{pseudostraightening} for simplices. 
\end{abstract}



\section{\label{sezione 1} Introduction and outline of the paper}

The simplicial volume  
is an invariant of manifolds introduced by Gromov in his seminal paper~\cite{G}. If $M$ is a connected, compact, oriented manifold, 
then the simplicial volume $\| M\|$ of $M$ is the 
infimum of the sum of the absolute values of the coefficients over all singular chains representing the real fundamental cycle of $M$ (see Section~\ref{sezione 2}). 
If $M$ is $n$-dimensional and open, then the $n$-dimensional homology of $M$
vanishes. Then, in order to get a well-defined notion of fundamental class, whence
of simplicial volume, one needs to refer to \emph{locally finite} chains.

Even if it depends only on the proper homotopy type of a manifold, the simplicial volume is deeply related
to the geometric structures that a manifold can carry. For example, closed manifolds which support 
negatively curved Riemannian metrics have nonvanishing simplicial volume, while the simplicial 
volume of flat or spherical manifolds is null (see \emph{e.g.}~\cite{G}).
An important result which relates the simplicial volume of a manifold
to its geometry is \emph{Gromov Proportionality Principle}:

\begin{theorem} [\cite{G}] 
\label{classical Proportionality Principle} Let $M$ and $N$ be 
\emph{compact} Riemannian manifolds sharing the same Riemannian universal covering. Then 
$$ \frac{\|M\|}{\vol M} = \frac{\|N\|}{\vol N}. $$
\end{theorem}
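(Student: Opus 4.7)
The plan is to exploit the common Riemannian universal cover $\widetilde{X}$ together with its isometry group $G = \isom(\widetilde{X})$. Since $M$ and $N$ are compact, the deck groups $\Gamma_M = \pi_1(M)$ and $\Gamma_N = \pi_1(N)$ embed as cocompact discrete subgroups of $G$, so $G$ is locally compact and unimodular, and carries a bi-invariant Haar measure $d\mu$. The idea is to produce, out of a single $G$-invariant ``universal'' measure cycle on $\widetilde{X}$, efficient fundamental cycles for both $M$ and $N$ simultaneously, thereby forcing the ratios $\|M\|/\vol M$ and $\|N\|/\vol N$ to coincide.

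The key technical ingredients are straightening, smearing, and measure homology. First, by a suitable straightening procedure (for instance geodesic coning in the non-positively curved or locally symmetric case), any real fundamental cycle of $M$ can be replaced by one of the same $\ell^1$-norm whose simplices are canonical and $G$-equivariant when lifted to $\widetilde{X}$. Second, for each such straight simplex $\widetilde\sigma\colon\Delta^n\to\widetilde{X}$ define the smearing $\smear_M(\widetilde\sigma)$ as the Borel measure on $\map(\Delta^n,M)$ obtained by pushing $d\mu$, restricted to a fundamental domain of $\Gamma_M$ in $G$, forward along $g\mapsto p_M\circ g\widetilde\sigma$, where $p_M\colon\widetilde{X}\to M$ is the covering projection. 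This lands in the measure-homology complex of Zastrow--Hansen, which by L\"oh's theorem computes singular homology isometrically on CW complexes; hence the measure-homology seminorm of a smeared fundamental cycle upper-bounds the simplicial volume $\|M\|$.

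Applying smearing to a straightened fundamental cycle $c$ for $M$ yields a measure cycle $\mu_M$ representing $\vol(M)\cdot[M]_{mh}$, with total variation controlled by $\|c\|\cdot\vol(M)$. The symmetric construction for $N$ produces $\mu_N$ with the analogous bound. The crucial observation is that on $\widetilde{X}$ one is averaging the \emph{same} $G$-invariant distribution of straight simplices in both cases: dividing by the respective volumes gives one and the same equivariant object upstairs, which forces both inequalities $\|M\|/\vol M \le \|N\|/\vol N$ and its converse by pushing down to $N$ and $M$ respectively.

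The main obstacle is the straightening step: without any curvature hypothesis there is no canonical way to deform a singular simplex into a geodesic one in a $G$-equivariant manner, so the classical argument only covers the non-positively curved or locally symmetric setting. Removing this assumption is precisely the purpose of the \emph{pseudostraightening} announced in the abstract. A subsidiary difficulty is verifying that the smearing genuinely produces cycles in the measure-homology complex and that its total variation norm is comparable to $\|c\|\cdot\vol(M)$; both issues become more delicate in the non-compact Lipschitz setting treated later in the paper.
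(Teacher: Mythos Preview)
Your outline is the Thurston--L\"oh route: smear a fundamental cycle of $M$ through $G=\isom^+(\widetilde X)$ to obtain a measure cycle on $N$ representing $(\vol M/\vol N)[N]$ with total variation at most $\|c\|_1$, then invoke L\"oh's theorem that measure homology is isometrically isomorphic to singular homology. That is one of the two classical proofs the paper recalls in the introduction, and it is correct. However, the straightening step you insert is superfluous in this argument, and your claim that ``the classical argument only covers the non-positively curved or locally symmetric setting'' is mistaken for the compact theorem: the smearing map is defined on arbitrary singular simplices, and L\"oh's isometry theorem holds for any countable CW complex with no curvature hypothesis whatsoever. The true cost of that route is that L\"oh's isometry rests on the machinery of bounded cohomology.

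The paper's proof is genuinely different, and this difference is its point even in the compact case. Rather than appeal to the measure-homology isometry, the paper constructs a pseudostraightening $\st_*$ (Theorem~\ref{pseudostraight:thm}) sending each Lipschitz simplex to one drawn from a countable, locally finite, $\Lambda$-equivariant family; this lets one define a \emph{discrete smearing} $\varphi_*$ (see~\eqref{definizione di varphi}) landing directly in singular chains on $N$. One checks that $j_*\circ\varphi_*$ is chain-homotopic to $\smear_*$, pairs with the volume form to see that $\varphi_n(c_M)$ is $(\vol M/\vol N)$ times a fundamental cycle of $N$, and concludes because $\varphi_*$ is norm non-increasing. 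So a straightening-type step \emph{does} appear---but as the device that discretizes the smeared measure into an honest singular chain, replacing L\"oh's isometry and bypassing bounded cohomology entirely. Theorem~\ref{classical Proportionality Principle} then falls out as the compact instance of Theorem~\ref{main theorem}.
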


There exist two different approaches to Theorem~\ref{classical Proportionality Principle} available in the literature. 
The first one, which is closer to Gromov's original discussion of the issue, relies 
on the study of the (continuous) bounded cohomology of $M$ (see e.g.~\cite{Bucher, Frigerio}). The other one,
which is due to Thurston~\cite{T}, is based on the introduction of the \emph{measure homology} of $M$, and leads to the first detailed proof of 
the proportionality principle, which was given by L\"oh in~\cite{Larticolo}.
However, it may be worth mentioning that, in~\cite{Larticolo}, 
the proof of the fact that measure homology is isometric to
the standard singular homology, which is the key step towards the proportionality principle, still relies on results about bounded cohomology.

As observed in~\cite{LS}, Gromov Proportionality Principle does not hold 
in the context of open manifolds: 
a deep (and difficult) result by Gromov shows that the simplicial volume of the 
product of three open manifolds necessarily vanishes~\cite[Example (a), p. 59]{G}, 
so the simplicial volume of the product of three cusped hyperbolic surfaces is zero; 
on the other hand, it is well-known that, in the case of closed manifolds, the simplicial volume is supermultiplicative (see again~\cite[Section 0.2]{G}),
so the product of the simplicial volume of three \emph{closed} hyperbolic surfaces is positive. 
But both the product of three cusped hyperbolic surfaces and the product of three closed hyperbolic surfaces
are covered by the product of three 
copies of the hyperbolic plane. This implies that the proportionality principle cannot hold for open manifolds.
In fact, the geometric meaning of the simplicial volume of open manifolds is still 
quite mysterious: for example, it is not known whether it vanishes even in the simple case of the
product of two one-holed tori (see also \cite{BKK} for more (non) vanishing results for the simplicial volume of non-compact manifolds).

In order to circumvent these difficulties, Gromov introduced the notion of \emph{Lipschitz simplicial volume}~\cite{G}. Roughly speaking,
the Lipschitz simplicial volume $\|M\|_\lip$ of an open manifold $M$ is obtained by minimizing the sum of the absolute values of the coefficients
over locally finite real fundamental cycles whose simplices have a uniformly bounded Lipschitz constant (see Section \ref{sezione 2} for the precise definition).
L\"oh and Sauer then proved the Gromov Proportionality Principle for the Lipschitz simplicial volume of \emph{non-positively curved}
Riemannian manifolds. 

The purpose of the present work is to generalize L\"oh's and Sauer's results in order to drop any curvature condition. In fact, our main result is the following

\begin{theorem} [Main theorem] \label{main theorem} Let $M$ and $N$ be complete finite-volume Riemannian manifolds with isometric universal covers. Then:
$$ \frac{\|M\|_{\Lip}}{\vol M} = \frac{\|N\|_{\Lip}}{\vol N}. $$
\end{theorem}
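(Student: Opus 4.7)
The overall strategy will follow the measure-homology approach of Thurston--L\"oh--Sauer to the proportionality principle, the only novelty being the replacement of the geodesic straightening (which requires non-positive curvature to produce Lipschitz simplices with controlled constants) by an equivariant \emph{pseudostraightening}. As a preliminary step I would identify $\|M\|_{\Lip}$ with the seminorm of the fundamental class in a suitable Lipschitz measure homology of $M$: a Lipschitz locally finite cycle with uniformly bounded Lipschitz constant yields a measure cycle of the same total variation supported on Lipschitz simplices, and conversely parametrised measure cycles with bounded parameter space can be discretised at the cost of arbitrarily small error. This reduces the main theorem to an equality between seminorms on Lipschitz measure homology, and it is this seminorm which is amenable to averaging constructions.

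The heart of the argument is the construction of the pseudostraightening. This will be an $\isom(\widetilde M)$-equivariant chain map
\[
\pst : C_*(\widetilde M)\longrightarrow C_*^{\Lip}(\widetilde M),
\]
replacing every singular simplex by a Lipschitz one whose Lipschitz constant is controlled by a dimensional constant together with some geometric data of $\widetilde M$ (essentially a lower bound on the injectivity radius on bounded sets), and which is equivariantly chain-homotopic to the canonical inclusion. Without non-positive curvature there is no canonical barycentric candidate; the natural idea is to fix an equivariant simplicial subdivision of the model simplex $\Delta^n$ whose mesh is small enough that on each piece the image can be replaced by a locally geodesic map anchored at the image of a vertex, and then to glue the pieces via a controlled equivariant smoothing. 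Equivariance is ensured by making the whole construction depend only on a measurable choice of auxiliary data on $\widetilde M$, rather than on the simplex itself.

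Once the pseudostraightening is available, the smearing argument goes through essentially as in L\"oh--Sauer. Given a fundamental Lipschitz cycle $c$ on $M$, one lifts it to a locally finite cycle $\widetilde c$ on $\widetilde M$, applies $\pst$ so that every participating simplex has Lipschitz constant below a uniform bound, and then averages over $\isom(\widetilde M)/\pi_1 N$ with respect to the measure induced by the Riemannian volume of $N$. The resulting object is a measure cycle on $N$ supported on Lipschitz simplices with controlled constant, whose total variation equals $\vol(M)/\vol(N)$ times the $\ell^1$-norm of $c$. Together with the identification of the Lipschitz measure-homology seminorm with $\|\cdot\|_{\Lip}$ this yields $\|N\|_{\Lip}\cdot\vol M\le \|M\|_{\Lip}\cdot\vol N$, and the opposite inequality follows by exchanging the roles of $M$ and $N$.

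The principal obstacle is the construction of $\pst$: one has to simultaneously ensure (a) a uniform bound on the Lipschitz constant of pseudostraightened simplices, (b) $\isom(\widetilde M)$-equivariance, (c) the existence of an equivariant chain homotopy to the inclusion that stays inside the Lipschitz chain complex, and (d) the preservation of locally finite cycles, so that fundamental classes survive. Each of these requirements on its own is easy to realise, but satisfying all four at the same time is what forces the subdivision-and-glueing scheme to be carried out with care, and it is here that the technical bulk of the proof will be concentrated.
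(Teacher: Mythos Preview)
Your outline misplaces the main difficulty. You treat the isometric identification of $\|\cdot\|_{\Lip}$ with a Lipschitz measure-homology seminorm as a ``preliminary step'', justified by the remark that ``parametrised measure cycles with bounded parameter space can be discretised at the cost of arbitrarily small error''. But this discretisation is precisely the crux of the proportionality principle: producing, from a smeared measure cycle, a genuine locally finite Lipschitz cycle in the correct homology class with nearly the same norm. In the compact case L\"oh proved this isometry using bounded cohomology; the present paper explicitly avoids that route and never establishes such an isometry. If you assume it, the rest of your argument is essentially Thurston's original scheme and the pseudostraightening plays almost no role (your cycle $c$ already has uniformly bounded Lipschitz constant, and smearing by isometries preserves it).

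Relatedly, your pseudostraightening is required to be $\isom(\widetilde M)$-equivariant. The paper's $\st_*$ is only $\Lambda$-equivariant, where $\Lambda=\pi_1(N)$, and its decisive property is that for each $L$ the image $\st_i(S_i^{\lip}(\widetilde N))\cap S_i^L(\widetilde N)$ is \emph{locally finite} (Theorem~\ref{pseudostraight:thm}(3)). These two features cannot coexist: if $G=\isom(\widetilde M)$ is non-discrete, a $G$-invariant family of simplices cannot be locally finite. The paper exploits local finiteness of the image to define a \emph{discrete smearing}
\[
\varphi_i(\sigma)=\sum_{\varrho\in \widehat{S_i}(N)}\bigl|f_{\widetilde\sigma}^{-1}(\varrho)\bigr|_{\Lambda\backslash G}\,\varrho,
\]
which lands directly in $C_*^{\lf,\ell^1,\lip,\sm}(N)$ and is norm non-increasing. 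Measure homology is then used only as an auxiliary device: one shows that $j_*\circ\varphi_*$ is chain-homotopic to $\smear_*$, and a pairing with $\dvol_N$ identifies $\varphi_n(c_M)$ as $(\vol M/\vol N)$ times a fundamental cycle. Thus the pseudostraightening is not a pre-processing step to bound Lipschitz constants; it is the discretisation mechanism itself, and its construction (inductive, via sufficiently dense finite subsets of totally bounded families of simplices with prescribed pseudostraight boundary) is quite different from the subdivision-and-local-geodesic scheme you sketch.
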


In proving our main theorem, we will provide a new and self--contained proof also of the classical Proportionality Principle. Surprisingly enough, our proof does not make use of
the powerful machinery of bounded cohomology (which, as mentioned above, plays an important role in the available proofs of the proportionality principle in the compact case).
A slightly weaker result was recently obtained by K.~Strza{\l}kowski in~\cite{polacco}, who proved Theorem~\ref{main theorem} under the additional assumption
that the curvature of $M$ is bounded above.

Using results given in Section \ref{sezione 3}, we will also be able to prove the following result:

\begin{theorem} \label{productEstimates} Let $M$ and $N$ be complete Riemannian manifolds. Then:
$$ \|M\|_\lip \|N\|_\lip \le \|M \times N\|_\lip \le \binom{\dim M + \dim N}{\dim M} \|M\|_\lip \|N\|_\lip. $$
\end{theorem}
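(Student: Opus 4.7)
My plan is to prove the two inequalities separately, using the Eilenberg--Zilber shuffle product for the upper bound and the pseudostraightening from Section~\ref{sezione 3} for the lower bound.

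For the upper bound, set $m=\dim M$ and $n=\dim N$, fix $\varepsilon>0$, and choose $\varepsilon$-efficient locally finite Lipschitz fundamental cycles $c=\sum_i a_i\sigma_i$ of $M$ and $d=\sum_j b_j\tau_j$ of $N$ with uniformly bounded Lipschitz constants. The shuffle cross product
$$ c\times d=\sum_{i,j}a_ib_j\!\!\sum_{\mu\in \mathrm{Sh}(m,n)}\!\mathrm{sgn}(\mu)\,(\sigma_i\times\tau_j)\circ\mu^* $$
decomposes each product $\sigma_i\times\tau_j\colon\Delta^m\times\Delta^n\to M\times N$ into a signed sum of $\binom{m+n}{m}$ singular $(m+n)$-simplices via the affine shuffle maps $\mu^*\colon\Delta^{m+n}\to\Delta^m\times\Delta^n$. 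Using $d_{M\times N}^2=d_M^2+d_N^2$, each such simplex is again Lipschitz with controlled constant; and local finiteness of $c\times d$ follows since any compact $K\subset M\times N$ sits inside a product $K_M\times K_N$ meeting only finitely many $\sigma_i$ and $\tau_j$. A standard locally finite K\"unneth-type argument shows that $[c\times d]=[M\times N]$, so
$$ \|M\times N\|_\lip\le\|c\times d\|_1\le\binom{m+n}{m}\|c\|_1\|d\|_1. $$
Letting $\varepsilon\to 0$ gives the right-hand inequality.

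For the lower bound, the plan is to invert the cross product construction. Given an $\varepsilon$-efficient locally finite Lipschitz fundamental cycle $z$ of $M\times N$, I would first apply the pseudostraightening $\pst$ of Section~\ref{sezione 3} to regularise $z$ into a chain $\pst(z)$ of comparable norm and improved geometric behaviour. The property I need from $\pst$ is compatibility with the product structure: the pseudostraightening on $M\times N$ should coincide, up to a controlled chain homotopy, with the Eilenberg--Zilber combination of pseudostraightenings on the factors. Once this is granted, $\pst(z)$ admits a slant--type pairing against a near-optimal locally finite Lipschitz fundamental cycle of $N$, producing a locally finite Lipschitz fundamental cycle of $M$ whose $\ell^1$-norm is bounded by $\|z\|_1/\|N\|_\lip$ up to $\varepsilon$; symmetrising and passing to infima then yields $\|M\|_\lip\|N\|_\lip\le\|M\times N\|_\lip$.

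The main obstacle is clearly the lower bound, specifically the product compatibility of the pseudostraightening together with the existence of a well-behaved slant-product operation on pseudostraight chains. The upper bound reduces to bookkeeping once local finiteness and Lipschitz control are verified, but the lower bound requires $\pst$ to disentangle a generic $(m+n)$-cycle on $M\times N$ into its two factors with sharp norm estimates, playing the role that barycentric straightening plays in L\"oh--Sauer's non-positively curved argument. Verifying that the pseudostraightening constructed in Section~\ref{sezione 3} has precisely these features is where the heart of the proof will lie.
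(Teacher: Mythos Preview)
Your upper bound argument is essentially the standard one and matches what the paper (via \cite{LS}) does, so that half is fine.

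The lower bound, however, has a genuine gap. You propose to pair the pseudostraightened cycle $\pst(z)$ against ``a near-optimal locally finite Lipschitz fundamental cycle of $N$'' via a slant-type product. But the slant product does not take two \emph{homology} classes as inputs: to extract a chain on $M$ from a cycle on $M\times N$ you must pair against a \emph{cocycle} $\beta\in C^n(N)$, and the norm estimate $\|z/\beta\|_1\le\|\beta\|_\infty\,\|z\|_1$ only yields your claimed bound $\|z\|_1/\|N\|_\lip$ if there exists a bounded cocycle $\beta$ with $\langle\beta,[N]_\lip\rangle=1$ and $\|\beta\|_\infty$ close to $1/\|N\|_\lip$. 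That is precisely the duality between Lipschitz simplicial volume and bounded cohomology, which is \emph{not} established in this paper (the introduction stresses that bounded cohomology is avoided entirely) and is not known in general for the Lipschitz variant. So the step ``producing a locally finite Lipschitz fundamental cycle of $M$ whose $\ell^1$-norm is bounded by $\|z\|_1/\|N\|_\lip$'' is unsupported. Even granting such a $\beta$, the slant $z/\beta$ is a sum of \emph{projections to $M$ of front faces of the $\sigma_k$}, which need not be locally finite in $M$ unless $z$ is already sparse.

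The paper takes a different and more robust route. It observes that in \cite{LS} the entire proof of the product inequality is curvature-free \emph{except} for one step, \cite[Proposition~3.20]{LS}, which produces from any $c\in C_*^{\lf,\lip}(M\times N)$ a homologous \emph{sparse} cycle $c'$ (one whose projections to $M$ and to $N$ are each locally finite) with $\|c'\|_1\le\|c\|_1$. The paper then replaces that one proposition: apply the factorwise pseudostraightening
\[
(\st_M,\st_N)(\sigma)\;=\;\bigl(\st_M(\pi_M\circ\sigma),\,\st_N(\pi_N\circ\sigma)\bigr),
\]
use the homotopies $(h^M,h^N)$ together with Lemma~\ref{da chain top a chain homo pag 11 LS} to see it is chain-homotopic to the identity on $C_*^{\lf,\lip}(M\times N)$, and observe that its image is sparse because, by point~(3) of Theorem~\ref{pseudostraight:thm}, pseudostraight simplices with bounded Lipschitz constant form a locally finite family in each factor. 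No slant product, no bounded-cohomology duality, and no ``product compatibility up to chain homotopy'' beyond the trivial factorwise definition are needed.
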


Theorem \ref{productEstimates} generalizes an analogous proposition in \cite[Theorem 1.7]{LS}, where $M$ and $N$ are assumed to be non-positively curved.




Along the way, we also prove that the Lipschitz simplicial volume can be computed just by looking at \emph{smooth} chains (see Corollary \ref{3o7hfuweidj}).

\subsection*{Acknowledgements} This work is part of a Ph.~D.~project that I am developing under the supervision of Roberto Frigerio. I would like to thank him for many precious conversations about the subject. The author also thanks Clara L\"oh and Roman Sauer for useful comments on a preliminary version of the paper.

\section{\label{sezione 2} Basic definitions and strategy of the proof}

Unless otherwise stated, in this paper all the (co)chains modules and the (co)homology modules will be understood
with (trivial) real coefficients. Therefore, if $X$ is any topological space, we will denote by
$C_*(X)$ (resp.~$C^*(X)$) and $H_*(X)$ (resp.~$H^*(X)$) the singular (co)chains (resp.~singular (co)homology) of $X$ with real coefficients.

We say that a 
a set of $i$--simplices $\{\sigma_k\}_{k \in \N}\subseteq S_i(X)$ is \textbf{locally finite} if every compact subset of $X$ 
intersects the image of $\sigma_k$ only for a finite number of indices. For every formal sum
$c=\sum_{k\in\N} \lambda_k\sigma_k$, where $\sigma_k\in S_i(X)$ and $\sigma_k\neq \sigma_h$ if $h\neq k$, we define the \textbf{support}
of $c$ by setting $\supp(c)=\{\sigma_k\, |\, \lambda_k\neq 0\}\subseteq S_i(X)$.

Then, the \textbf{complex of locally finite chains $C_*^{\lf}(X)$} is defined as follows: 
$$ C_i^{\lf}(X) := \left\{c=\sum_{k \in \N} \lambda_k \sigma_k \: \supp(c) \textrm{ is a locally finite family } \right\}\ . $$ 
It is easy to check that the usual boundary operator on finite chains extends to locally finite ones, so $C_*^{\lf}(X)$ is indeed
a differential complex. 
We denote by $H_*^{\lf}(X)$ the corresponding homology. Of course, if $X$ is compact, then $C_*(X)=C_*^{\lf}(X)$ and
$H_*(X)=H_*^{\lf}(X)$.

When $M$ is a Riemannian manifold, we may restrict our attention to locally finite chains satisfying an additional regularity property.
For $\sigma \in S_i(M)$ we denote by $\lip (\sigma)\in [0,\infty]$ the Lipschitz constant of $\sigma$, where
it is understood that $L(\sigma)=\infty$ if $\sigma$ is not Lipschitz, and for 
a locally finite chain $c\in C_*^{\lf}(M)$ we set
$$ \lip (c) := \sup \{\lip(\sigma) \: \sigma\in \supp(c)\}\in [0,\infty] \ .$$
If $\lip (c) < \infty$ we say that $c$ is a \textbf{Lipschitz chain}. 
Locally finite Lipschitz chains provide a 
subcomplex $C_*^{\lf, \lip}(M)$ of $C_*^{\lf}(M)$, whose associated homology will be denoted by $H_*^{\lf, \lip}(M)$.

In order to define the (Lipschitz) simplicial volume, we also need to put on (locally finite) chains an $\ell^1$-norm $\|\cdot\|_1$, which is defined as follows:
for every $c=\sum_{k\in\N} \lambda_k\sigma_k\in C_i^{\lf}(M)$ such that $\sigma_k\neq \sigma_h$ for $h\neq k$ we set
$$
\|c\|_1=\sum_{k\in\N} |\lambda_k|\ \in\ [0,\infty].
$$
This (possibly infinite) norm restricts to $C_i(M)$ and $C_i^{\lf,\lip}(M)$. By taking the infimum over representatives, these norms define 
(possibly infinite) semi-norms (still denoted by $\|\cdot\|_1$)
on the homology modules $H_i(M)$, $H_i^{\lf}(M)$, $H_i^{\lf,\lip}(M)$. Moreover, we define $C_*^{\lf, \ell^1, \lip}(M)$ as the subcomplex of $C_*^{\lf,\lip}(M)$ whose chains have finite $\ell^1$ norm.

Let now $M$ be an $n$-dimensional Riemannian manifold. In this paper, every manifold will be assumed to be connected and oriented. 
Of course, one may define locally finite and Lipschitz chains also in the context of singular chains with \emph{integral} coefficients. Then,
it is well-known that the $n$-dimensional locally finite homology module of $M$ is isomorphic to $\Z$, and generated by the so--called \emph{integral fundamental class} of $M$.
We denote by $[M]\in H_n^{\lf}(M)$ the \emph{real fundamental class} of $M$, i.e.~the image of the integral fundamental class under the change of coefficients homomorphism. 
We are now ready to define the simplicial volume of $M$.

\begin{dfn}[\cite{G}]
 If $M$ is an $n$-dimensional manifold, then the \textbf{simplicial volume} $\|M\|$ of $M$ is given by
 $$
 \| M\|=\| [M]\|_1\ .
 $$
\end{dfn}

Of course, if $M$ is compact, then $\|M\|<\infty$. However, the simplicial volume of open manifolds may be infinite.
Before defining the Lipschitz simplicial volume, we recall the following result: 



\begin{theorem}[\cite{LS}, Theorem 3.3]\label{LS1}
 Let $M$ be a connected Riemannian manifold. Then the homomorphism
 $$
 H_*^{\lf,\lip}(M)\to H_*^{\lf}(M)
 $$
 induced by the inclusion $C_*^{\lf,\lip}(M)\to C_*^{\lf}(M)$ is an isomorphism. 
\end{theorem}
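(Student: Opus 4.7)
The plan is to construct a chain map $s \colon C_*^{\lf}(M) \to C_*^{\lf,\lip}(M)$ and a chain homotopy $h \colon C_*^{\lf}(M) \to C_{*+1}^{\lf}(M)$ satisfying $i \circ s - \id = \partial h + h \partial$, where $i \colon C_*^{\lf,\lip}(M) \to C_*^{\lf}(M)$ is the inclusion. If the construction is arranged so that $h$ restricts to a chain homotopy on $C_*^{\lf,\lip}(M)$ between $s \circ i$ and $\id$ (which is automatic as soon as the prisms attached to Lipschitz simplices are themselves Lipschitz), then $i$ becomes a chain-homotopy equivalence and induces the desired isomorphism in homology.

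To build $s$, I would first fix a locally finite cover $\{U_\alpha\}$ of $M$ by relatively compact geodesically convex balls, whose radii are chosen small with respect to the local injectivity radius and curvature of $M$, so that each $U_\alpha$ lies in a totally normal neighborhood and iterated geodesic barycenters inside $U_\alpha$ produce simplices whose Lipschitz constant is bounded by a universal constant $L$. Given a singular simplex $\sigma \colon \Delta^p \to M$, the compactness of $\sigma(\Delta^p)$ together with local finiteness of $\{U_\alpha\}$ guarantees that a finite number of barycentric subdivisions of $\Delta^p$ produces a triangulation all of whose sub-simplices map via $\sigma$ into some $U_\alpha$. Straightening each sub-simplex geodesically inside the enclosing $U_\alpha$, and defining the straightening inductively along the skeleta of $\Delta^p$ so as to be compatible with face maps, gives $s(\sigma)$. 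The chain homotopy $h$ is then obtained by triangulating the prism $\Delta^p \times [0,1] \to M$ realizing the geodesic straight-line homotopy from $\sigma$ to $s(\sigma)$.

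The construction faces two main difficulties. The first, functoriality of the straightening with respect to face maps, is addressed by the classical skeleton-by-skeleton inductive extension (vertices first, then edges, and so on), which ensures that restriction to a face agrees with the straightening of that face. The second --- and the genuine obstacle --- is to secure simultaneously a \emph{uniform} Lipschitz bound on $s(c)$ and local finiteness of both $s(c)$ and $h(c)$. Local finiteness follows from the observation that both $s(\sigma)$ and the prism joining $\sigma$ to $s(\sigma)$ are supported in an arbitrarily small neighborhood of $\sigma(\Delta^p)$, so that a locally finite family of original simplices yields a locally finite family after straightening and prism triangulation. The uniform Lipschitz estimate, however, forces the radii of the $U_\alpha$ and the subdivision depth to be calibrated carefully to the local geometry of $M$: in regions of small injectivity radius or large curvature the radii must shrink and the subdivision must be refined accordingly. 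Balancing these choices while preserving the inductive skeletal construction is the technical heart of the argument.
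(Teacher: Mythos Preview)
The paper does not prove this statement; it is quoted verbatim from \cite{LS} (L\"oh--Sauer, Theorem~3.3) and used as a black box to justify the existence of the Lipschitz fundamental class. There is therefore no argument in the present paper to compare your proposal against.

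Regarding the proposal itself: the overall strategy---subdivide until each piece lies in a small convex ball, straighten geodesically, and build the chain homotopy from the prism---is reasonable and close in spirit to the proof that $\mathcal{U}$-small chains compute singular homology. But there is a genuine gap you have not closed. The subdivision depth $k(\sigma)$ you need depends on $\sigma$, so if you set $s(\sigma)=(\text{geodesic straightening})\circ \mathrm{Sd}^{k(\sigma)}(\sigma)$, then $s$ will in general fail to commute with $\partial$: a face $\partial_j\sigma$ may require fewer subdivisions than $\sigma$ does, and then $s(\partial_j\sigma)$ and the $j$-th face part of $\partial s(\sigma)$ are genuinely different chains. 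Your remark about ``defining the straightening inductively along the skeleta of $\Delta^p$'' only synchronises the pieces \emph{within} a single subdivided simplex; it does nothing to reconcile two distinct simplices sharing a common face but requiring different depths. The standard fix (as in the excision argument) is to use the chain homotopy $T$ between $\mathrm{Sd}$ and $\id$ to splice the various depths together into an honest chain map and chain homotopy; this is routine but not automatic, and without it your $s$ is not a chain map. One must then also verify that the resulting operators preserve local finiteness---they do, because on any compact set only finitely many simplices of a locally finite chain appear, hence the subdivision depths are locally bounded---but this check should be made explicit.
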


Therefore, we can denote by $[M]_{\lip}$ the \textbf{Lipschitz fundamental class} of $M$, i.e. the element
of $H_*^{\lf,\lip}(M)$ corresponding to $[M]$ under the isomorphism provided by Theorem~\ref{LS1}. 
The following definition was originally given in \cite[Section 4.4f]{G}, and introduces the main object of study of this paper.

\begin{dfn}[\cite{LS}] \label{lipvolume} Let $M$ be a Riemannian manifold. Then the \textbf{Lipschitz simplicial volume} of $M$ is given by
 $$
 \|M\|_\lip=\|[M]_\lip\|_1\ .
 $$
\end{dfn}

Of course we have $\|M\|_\lip \geq \|M\|$ for every Riemannian manifold $M$. However, it may well be that the equality
does not hold: for example if $M$ is the product of three hyperbolic surfaces, then $\|M\|_{\lip}>0$ by L\"oh and Sauer Proportionality Principle,
while $\|M\|=0$ by Gromov's vanishing theorem for the product of three open manifolds.


\subsection*{Strategy of the proof}
Let us now come to the situation we are interested in, i.e.~let $M$ and $N$ be complete finite-volume Riemannian manifolds sharing the same
universal covering $U$. In order to compare the (Lipschitz) simplicial volume of
$M$ with the (Lipschitz) simplicial volume of $N$ it is necessary to 
produce a (Lipschitz) fundamental cycle for $N$ out of 
a (Lipschitz) fundamental cycle of $M$. In doing this, one also needs to keep control of the $\ell^1$-norm of the resulting cycle.
Of course, one may lift a fundamental cycle for $M$ to a fundamental cycle for $U$ which is invariant with respect to the action of
$\pi_1(M)$ on $U$. In order to project this cycle onto a cycle on $N$, however, the invariance with respect to the action of
$\pi_1(N)$ is needed. To this aim, Thurston introduced a \emph{smearing} procedure, which allows to average any cycle on $U$
with respect to the action of the full group of orientation-preserving isometries on $U$. The resulting object is obviously $\pi_1(N)$-invariant,
but it is no more a genuine (locally finite) cycle: in fact, the smearing of a cycle is a so-called \emph{measure cycle}. 
Therefore, in order to conclude the proof of the Proportionality Principle it is necessary to show that 
the simplicial volume may be computed in terms of a suitably defined seminorm on measure homology. Equivalently, one should show
that the existence of a measure fundamental cycle of a given norm implies the existence of a genuine (Lipschitz locally finite)
fundamental cycle whose norm approximates arbitrarily well the $\ell^1$-norm of the measure cycle.

In the compact case, the details of the proof just sketched were filled in by L\"oh, who proved that singular homology and measure homology are isometrically isomorphic~\cite{Larticolo}.
In the non-compact non-positively curved case, L\"oh and Sauer defined in~\cite{LS} a \emph{straightening} operator on simplices, which can be exploited to
turn measure cycles into genuine (Lipschitz locally finite) cycles, thus proving Theorem~\ref{main theorem} under the hypothesis that the manifolds involved are non-positively
curved. In this paper we show how to define a \emph{pseudostraightening} operator for Lipschitz simplices in manifolds without any curvature bound. In fact, the key result of this paper
is given by Theorem~\ref{pseudostraight:thm} below, which is proved in Section~\ref{sezione 3}, and is probably of independent interest. For technical reasons, that will be apparent later, the theorem pays a particular attention to smooth simplices. Before stating it, let us introduce some notation that will be extensively used later on.


If $Y$ and $X$ are metric spaces, we denote by $\lip(Y, X)$ the set of Lipschitz maps from $Y$ to $X$. We endow $\Delta^i$ and $\Delta^{i} \times I$ with the Euclidean metrics that they inherit as subspaces of $\R^i$ and $\R^{i+1}$ respectively. If $X$ is a Riemannian manifold and $L\geq 0$, we set $S_i^L(X)=\{\sigma\in S_i(X)\, |\, \lip(\sigma)\leq L\}$, and $S_i^\lip(X) = \{\sigma\in S_i(X)\, |\, \lip(\sigma)<\infty\} = \lip(\Delta^i, X)$. On $S_i(X)$ we put the metric of uniform distance $d_\infty$ defined by $d_\infty(\sigma_1,\sigma_2)=\sup_{x\in\Delta^i}\{d(\sigma_1(x),\sigma_2(x))\}$. For every $i\in\mathbb{N}$ we denote by $S_i^{\sm}(X)=C^1(\Delta^i,X)$ the space of \emph{smooth} singular $i$-simplices with values in $X$, i.e. those maps $\Delta^i \to X$ that admit a $C^1$--extension over a neighborhood of $\Delta^i$ in $\R^{i}$. We endow $S_i^{\sm}(X) := C^1(\Delta^i, X)$ with the structure of a measurable space, whose measurable sets are the Borel sets with respect to the $C^1$--topology
  (see \cite[Section 4.2]{LS}). We add the superscript ${}^\sm$ when we consider the smooth version of the complexes above.

As mentioned above, the following result will be proved in Section~\ref{sezione 3}. Then, in Section~\ref{sezione 4} we will deduce our main Theorem~\ref{main theorem}
from Theorem~\ref{pseudostraight:thm}. Finally, in Section~\ref{sezione 5} we will deal with Theorem \ref{productEstimates}.

\begin{theorem}\label{pseudostraight:thm}
Let $N$ be a complete Riemannian manifold with Riemannian universal covering $p_N \: \widetilde{N} \longrightarrow N$, and fix an identification of 
$\pi_1(N)$ with a discrete subgroup $\Lambda<\isom(\widetilde{N})$ such that $N=\widetilde{N}/\Lambda$. Then, there exists a $\Lambda$-equivariant \emph{pseudostraightening operator}
$\st_*\colon S_*^{\lip}(\widetilde{N}) \to S_*^{\sm}(\widetilde{N})$ such that:
\begin{enumerate}
    \item 
    For every $i \in \N$ and $\widetilde \sigma \in S_i^\lip(\widetilde{N})$ there exists a
    $\Lambda$--equivariant preferred Lipschitz homotopy $\widetilde h_{\widetilde \sigma}$ from $ \widetilde \sigma$ to $\st_i(\widetilde \sigma)$ such that, for $1 \le k \le i$,
$$ \widetilde h_{\partial_k \widetilde \sigma} = \widetilde h_{ \widetilde \sigma} \circ (\partial_k \times \id_{[0, 1]}) $$
(where, with a slight abuse of notation, with the symbol $ \partial_k$ we denote both the $k$th face operator and the affine map $\Delta^{n-1} \to \Delta^n$ associated with the $k$th face). 
    \item For every $i \in \N$, there exists a
     function $b_{i} \: \R_{\ge 0} \to \R_{\ge 0}$ such that, for every $ \widetilde \sigma\in S_i^{\lip}(\widetilde{N})$, we have
$$ \lip( \widetilde h_{ \widetilde \sigma}) \le b_i(\lip( \widetilde \sigma)). $$
In particular,
$$ \lip(\st_i(\widetilde \sigma)) \le b_i(\lip(\widetilde \sigma)). $$
    \item For every $0 \le L \in \R$, the family of simplices $\st_i(S_i(\widetilde{N})) \cap S_i^{L}(\widetilde{N})$
    is locally finite.

\item If $\tilde \sigma$ is smooth, the homotopy $\tilde h_{\tilde \sigma}$ is smooth. 
\item The restriction:
$$ S_*^{\sm}(\tilde N) \to C^1(\Delta^i \times I, \tilde N) \qquad \tilde \sigma \mapsto \tilde h_{ \tilde \sigma} $$
is Borel with respect to the $C^1$--topologies. 
\end{enumerate}
\end{theorem}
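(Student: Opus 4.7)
My plan is to build the pseudostraightening by induction on the simplex dimension $i$, combining boundary data (from the inductive hypothesis) with an interior smoothing built from a fixed $\Lambda$-equivariant auxiliary structure on $\widetilde N$.

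First I would fix a $\Lambda$-invariant locally finite open cover $\{U_\alpha\}_{\alpha\in A}$ of $\widetilde N$ by geodesically convex open balls whose radii are uniformly bounded by a fraction of the injectivity radius, together with a $\Lambda$-equivariant subordinate smooth partition of unity $\{\varphi_\alpha\}$; the existence of such a cover is standard, using that $\Lambda$ acts properly discontinuously on $\widetilde N$. On sufficiently small scales the \emph{Riemannian center of mass} (Karcher mean) is well defined, depends smoothly on its data, and is equivariant under isometries of $\widetilde N$; this will be the essential smoothing device.

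For $i=0$ set $\st_0=\id$ with the constant homotopy. Inductively, assume $\st_{i-1}$ and $\tilde h$ are defined on $(i-1)$-simplices with all the required properties. For $\tilde\sigma\in S_i^{\lip}(\widetilde N)$ the face-compatibility condition in (1) forces $\tilde h_{\tilde\sigma}$ on $\partial\Delta^i\times I$ to be determined by the family $\{\tilde h_{\partial_k\tilde\sigma}\}$. I would extend this boundary-defined homotopy to $\Delta^i\times I$ by choosing a canonical smooth collar of $\partial\Delta^i$ in $\Delta^i$, along which the homotopy prolongs the boundary data, and, in the remaining interior region, by averaging $\tilde\sigma$ through the Karcher mean against a smooth mollifying kernel whose scale is adapted to $\lip(\tilde\sigma)$ (small enough that all points being averaged lie in a common convex ball). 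The homotopy is then completed by geodesic interpolation between $\tilde\sigma$ and the resulting smooth map. The $\Lambda$-equivariance, the smoothness of $\st_i(\tilde\sigma)$, the Lipschitz bound $b_i$ in (2), the preservation of smoothness under smooth inputs (4), and Borel measurability (5) all follow because the construction is a composition of natural $\Lambda$-equivariant operations depending continuously (in the $C^1$-topology) on their inputs; the explicit bound $b_i(L)$ follows from a compactness argument on $\Delta^i$ together with uniform derivative estimates for the Karcher mean in convex balls.

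The main obstacle is condition (3). A purely mollification-based $\st_i$ varies continuously with the input simplex and so has a continuum of outputs, which cannot be locally finite. To remedy this, I would modify the interior step so that it depends on $\tilde\sigma$ only through \emph{discrete data} extracted from its interaction with the cover: for example, by first replacing $\tilde\sigma$ with a canonical combinatorial representative determined by the (finite) list of indices $\alpha$ with $\tilde\sigma(\Delta^i)\cap U_\alpha\neq\emptyset$ and by a preferred center point in each relevant ball, and then applying the smoothing to this representative. Because an $L$-Lipschitz simplex meeting a compact $K\subset\widetilde N$ has image inside a bounded neighborhood of $K$ that intersects only finitely many $U_\alpha$, only finitely many combinatorial types can contribute, forcing local finiteness of $\st_i(S_i(\widetilde N))\cap S_i^L(\widetilde N)$. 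The delicate point, and the technical heart of the argument, is to arrange this discretization so that it is still simultaneously compatible with the face operators (so that the data of $\partial_k\tilde\sigma$ is the restriction of the data of $\tilde\sigma$) and so that it respects the Lipschitz and smoothness estimates established in the continuous version.
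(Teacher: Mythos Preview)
Your inductive scaffolding (extend the boundary homotopy along a collar, then smooth the interior) is close in spirit to what the paper does, but there are two concrete gaps.

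First, the base case fails immediately: with $\st_0=\id$ the image $\st_0(S_0(\widetilde N))\cap S_0^L(\widetilde N)$ is all of $\widetilde N$, which is not locally finite, so condition~(3) is violated already in dimension~$0$. The paper instead chooses, in each piece of a $\Lambda$-equivariant Borel partition of $\widetilde N$ by bounded sets, a \emph{finite} set of preferred points and sends every $0$-simplex to the nearest such point via a short geodesic; this is exactly the discretization you postpone to higher dimensions.

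Second, and more seriously, your discretization scheme for~(3) does not actually produce a simplex. Knowing the finite list of indices $\alpha$ with $\tilde\sigma(\Delta^i)\cap U_\alpha\neq\emptyset$, together with a preferred center in each ball, is far too little data to determine a ``canonical combinatorial representative'' $\Delta^i\to\widetilde N$ (in the absence of nonpositive curvature there is no barycentric or geodesic-cone construction available), and even less to guarantee that its faces coincide with the already-chosen $\st_{i-1}(\partial_k\tilde\sigma)$. The paper avoids this problem entirely: it does not try to reconstruct a simplex from combinatorial data. Instead it observes, via Arzel\`a--Ascoli, that for each $L$ and each bounded region the set of $L$-Lipschitz simplices with first vertex in that region is \emph{totally bounded} in the sup metric; one then fixes, within each class of simplices having a prescribed pseudostraight boundary, a finite ``sufficiently dense'' subset of smooth simplices, and maps every $\sigma'$ to its nearest representative by a short geodesic straight-line homotopy (whose Lipschitz constant is controlled by a local CAT($\kappa$) estimate). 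Face-compatibility is then automatic, because the net is chosen \emph{after} the boundary has been pseudostraightened and is stratified by boundary type. This Arzel\`a--Ascoli/$\varepsilon$-net mechanism is the missing idea in your proposal.
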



\section{\label{sezione 3} Pseudostraightening.}


Let $N,\widetilde{N}$ and $\Lambda$ be as at the end of the previous section. We denote by $G$ the group of orientation-preserving isometries of $\widetilde{N}$, with the compact-open topology. 
If $N$ is non-positively curved, then $\widetilde{N}$ is uniquely geodesic. This allows to define a straightening operator $\mathrm{st}_*\colon S_*^\sm(\widetilde{N})\to S_*^\sm(\widetilde{N})$
as follows (see e.g.~\cite{LS}).
Consider a $\Lambda$--invariant set $\widetilde T$ in $\widetilde{N}$, 
and a Borel $\Lambda$--equivariant partition $ \widetilde {\mathscr B}$ of $\widetilde{N}$, in such a way that every element of $\widetilde {\mathscr B}$ contains a single $t \in \widetilde T$. Moreover, suppose that the elements of $ \widetilde {\mathscr B}$ have diameter bounded by $1$, and that every point in $\widetilde N$ is at 
distance at most $1$ from an element in $\widetilde T$. 
Given a $k$--singular smooth simplex $\widetilde \sigma$ in $ \tilde N$, the $k+1$ vertices of $\widetilde{\sigma}$
determine $k+1$ elements of $ \widetilde {\mathscr B}$, whence of $\widetilde T$. Thanks to 
the uniqueness of geodesics of $\widetilde{N}$, 
these points in $\widetilde{T}$ span a well-defined straight simplex $\mathrm{st}_k(\widetilde{\sigma})$.

\subsection*{Convexity and straight homotopies in Riemannian manifolds}
In the general case, a more subtle construction is needed. We say that a subset $A$ of a Riemannian manifold $X$ is \textbf{(geodesically) convex} if for every pair of points $x$ and $y$ in $A$, 
there is a unique minimizing geodesic in $X$ between $x$ and $y$, and this geodesic is contained in $A$. 
Every Riemannian manifold is locally convex, in the sense that for every point of the manifold there is a basis of convex neighbourhoods (see e.g. \cite[Theorem 1.9.10, Corollary 1.9.11]{Kling}). Moreover, a Riemannian manifold is of curvature less than $\kappa$ if and only if it is locally CAT($\kappa$), in the sense that for every point in the manifold there is a convex neighborhood which is a CAT($\kappa$)--space (see e.g. \cite[Theorem 1A.6, Definition 1.2 Chapter II.1]{BriHaf}).

If $x$ and $y$ are sufficiently close points in a Riemannian manifold $X$, we denote by $[x, y] \: [0, 1] \to X$ the unique minimizing constant speed geodesic from $x$ to $y$.

\begin{dfn} \label{straight homotopy}
 Let $\sigma_1$ and $\sigma_2$ be simplices in $S_i(X)$. 
 We say that $\sigma_1$ and $\sigma_2$ are \textbf{sufficiently close} if, for every $x\in\Delta^i$, there exist
 a point $p\in X$ and a positive radius $\rho>0$ (both depending on $x$) such that the ball $B_\rho(p)$ is convex and
 contains both $\sigma_1(x)$ and $\sigma_2(x)$. If $\sigma_1$ and $\sigma_2$ are sufficiently close, 
 then there is a well defined \textbf{straight homotopy} $[\sigma_1, \sigma_2]$ between $\sigma_1$ and $\sigma_2$ given by:
$$ [\sigma_1, \sigma_2](x, t) := [\sigma_1(x), \sigma_2(x)](t). $$ 
\end{dfn}



We will see later that, if $d_\infty(\sigma_1, \sigma_2)$ is sufficiently small, then $\sigma_1$ and $\sigma_2$ are sufficiently close according to our definition and, if $\sigma_1$ and $\sigma_2$ are Lipschitz,
there is a good control of the Lipschitz constant of $[\sigma_1, \sigma_2]$ in terms of the Lipschitz constants of $\sigma_1$ and $\sigma_2$. Moreover, since the exponential map is a local diffeomorphism, it follows that the homotopy is smooth, if $\sigma_1$ and $\sigma_2$ are smooth. Since we will need to compose straight homotopies in order to obtain a smooth homotopy, we define a version of the straight homotopy which is constant near $0$ and $1$: 
\begin{equation} \label{o378yqirhus} [\sigma_1, \sigma_2]_\sm(x, t) := [\sigma_1, \sigma_2](x, \xi(t)), \end{equation}
where $\xi \: [0, 1] \to [0, 1]$ is a fixed smooth surjective non-decreasing map, locally constant in a neighborhood of $\{0,\,1\}$.


We are now ready to begin the proof of Theorem~\ref{pseudostraight:thm}.

\begin{dfn} \label{ } For every $L>0$, we define a map $ \mathfrak r_L \colon S_*^{L}(\widetilde N) \to \R_{\ge 0}$ as follows. For 
$\sigma \in S_*^{L}(\widetilde{N})$, the value $\mathfrak r_L(\sigma)$ is equal to the supremum of the set of real numbers $r\in\R$ which satisfy the following property: if $\sigma_1$ and $\sigma_2 \in S_i^{L}(\widetilde{N})$ and $d_\infty(\sigma_i, \sigma) \le r$, then 
$\sigma_1$ and $\sigma_2$ are sufficiently close, and
the straight homotopy $[\sigma_1, \sigma_2]$ satisfies $\lip([\sigma_1, \sigma_2]) \le 4L$.
\end{dfn}


\begin{lem} \label{ohnfjcklw} For every $L \ge 1$ the function:
$$ \mathfrak r_L \: (S_i^{L}(\widetilde{N}), \, d_\infty) \to \R $$
is $G$--invariant, strictly positive, and locally $1$--Lipschitz, hence is Borel with respect to the $C^0$--topology. It follows in particular that the restriction of $\mathfrak r_L$ on $S_i^{L, \sm}(\widetilde{N})$ is Borel with respect to the $C^1$--topology.
\end{lem}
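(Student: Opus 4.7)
The plan is to verify the three listed properties of $\mathfrak r_L$ in sequence: $G$--invariance is formal, local $1$--Lipschitzness follows from a triangle--inequality argument, and strict positivity is the heart of the matter, resting on compactness of $\sigma(\Delta^i)$ together with a Lipschitz--constant estimate for the straight homotopy in small convex balls. Once local $1$--Lipschitzness is established, the Borel statements are automatic: the function is continuous in $d_\infty$, hence Borel in the $C^0$--topology, and since the identity $(S_i^{L,\sm}(\widetilde N), C^1)\to (S_i^L(\widetilde N), C^0)$ is continuous, Borel sets pull back to Borel sets.

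For $G$--invariance, fix $g\in G$ and $\sigma\in S_i^L(\widetilde N)$. Because $g$ is an isometry, it preserves $d_\infty$, carries convex balls to convex balls, and sends minimizing geodesics to minimizing geodesics, so for any $\sigma_1, \sigma_2\in S_i^L(\widetilde N)$ one has $[g\circ\sigma_1,\,g\circ\sigma_2]=g\circ[\sigma_1,\sigma_2]$ with the same Lipschitz constant. The set of admissible radii $r$ in the definition is therefore unchanged, yielding $\mathfrak r_L(g\circ\sigma)=\mathfrak r_L(\sigma)$. For local $1$--Lipschitzness, set $\varepsilon=d_\infty(\sigma,\sigma')$ and pick $r<\mathfrak r_L(\sigma')$. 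If $\sigma_1,\sigma_2\in S_i^L(\widetilde N)$ satisfy $d_\infty(\sigma_j,\sigma)\le r-\varepsilon$, then by the triangle inequality $d_\infty(\sigma_j,\sigma')\le r$, so the defining condition holds for the pair $(\sigma_1,\sigma_2)$ relative to $\sigma'$. Hence $\mathfrak r_L(\sigma)\ge \mathfrak r_L(\sigma')-\varepsilon$, and swapping the roles gives $|\mathfrak r_L(\sigma)-\mathfrak r_L(\sigma')|\le d_\infty(\sigma,\sigma')$ wherever both values are finite.

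For strict positivity, let $\sigma\in S_i^L(\widetilde N)$ and set $K=\sigma(\Delta^i)$, which is compact. Local convexity of $\widetilde N$ together with compactness of a relatively compact neighborhood $K'$ of $K$ yields a uniform $\rho>0$ such that $B_{2\rho}(p)$ is geodesically convex for every $p$ in the $\rho$--neighborhood of $K$; moreover, since the sectional curvature is bounded above by some $\kappa$ on $K'$, one may (after possibly shrinking $\rho$) assume that each such ball is a CAT($\kappa$)--space. For $r<\rho$, any pair $\sigma_1,\sigma_2\in S_i^L(\widetilde N)$ with $d_\infty(\sigma_j,\sigma)\le r$ has $\sigma_1(x),\sigma_2(x)\in B_\rho(\sigma(x))$, which makes them sufficiently close in the sense of Definition~\ref{straight homotopy}. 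Write the straight homotopy as $H(x,t)=[\sigma_1(x),\sigma_2(x)](t)$. Rauch--type comparison in a small CAT($\kappa$) ball (convexity of the distance function along geodesics when $\kappa\le 0$, or the standard small--diameter estimate when $\kappa>0$) gives
$$ d(H(x,t),H(y,t))\le (1+O(r))\,L\,|x-y|, $$
and the temporal bound is immediate: $d(H(x,t),H(x,s))=|t-s|\cdot d(\sigma_1(x),\sigma_2(x))\le 2r\,|t-s|$. Combining the two via the triangle inequality one obtains $\lip(H)\le \sqrt{2}\,\max\{L(1+O(r)),\,2r\}$, which, using $L\ge 1$ and choosing $r$ small enough, is bounded by $4L$. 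Thus $\mathfrak r_L(\sigma)\ge r>0$.

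The main technical hurdle is the Lipschitz estimate on $H$: one must ensure that the map $(p,q,t)\mapsto [p,q](t)$ is, uniformly on a small enough CAT($\kappa$) ball, Lipschitz with constants close to the Euclidean ones. This is exactly where the curvature bound on $K'$ and the choice of a convexity radius compatible with CAT($\kappa$) enter. The rest of the verification (invariance, the triangle argument, and the transfer from $C^0$--Borel to $C^1$--Borel) is straightforward bookkeeping.
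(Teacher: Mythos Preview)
Your proof is correct and follows essentially the same route as the paper: $G$--invariance is formal, the $1$--Lipschitz property comes from the triangle inequality (your version is in fact slightly cleaner than the paper's), and strict positivity uses compactness plus a local CAT($\kappa$) estimate on the straight homotopy. The only substantive difference is in that last estimate: where you invoke a Rauch--type $(1+O(r))$ bound for $d(H(x,t),H(y,t))$, the paper instead inserts the auxiliary point $c=[\sigma_1(x),\sigma_2(y)](t)$ and applies the CAT($\kappa$) triangle comparison twice (to the triangles $\sigma_1(x)\,\sigma_2(x)\,\sigma_2(y)$ and $\sigma_2(y)\,\sigma_1(x)\,\sigma_1(y)$) to obtain the cruder but fully explicit bound $d(H(x,t),H(y,t))\le 2L\,|x-y|$, which already suffices for $\lip(H)\le 4L$ without any appeal to small--$r$ asymptotics.
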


\begin{proof} The $G$--invariance property is immediate. The sectional curvature over a fixed relatively compact (i.e. bounded) set is bounded above by a constant $\kappa \in \R$ (which depends on the relatively compact set). 
Given $\sigma \in S_i^{L}(\widetilde{N})$, let $V$ be a bounded neighborhood of $\sigma(\Delta^i)$, and let $0 < \kappa \in \R$ be such that the curvature on $V$ is less than $\kappa$. 
This means that $V$ admits a cover by CAT($\kappa$) convex open subsets, which induces in turn an open cover of $\sigma(\Delta^i)$. Since $\sigma(\Delta^i)$ is compact, such a cover admits a Lebesgue number $1>\rho>0$. Therefore, up to decreasing $\rho$, we have that
for every 
$y \in \sigma(\Delta^i)$, the ball $B_\rho(y) \subset V$ is convex and CAT$(\kappa)$, of diameter at most $1/10$ of the diameter of the ``comparison sphere'' $S_{\kappa}$ of constant curvature $\kappa$.



So, let $\sigma_1$ and $\sigma_2$ be Lipschitz $i$--simplices with $\lip(\sigma_j) \le L$ and such that $d_\infty(\sigma_j, \sigma) \le \rho/2$, $j=1,\,2$. By construction, $\sigma_1$ and $\sigma_2$ are sufficiently close, so there exists a well-defined straight homotopy $[\sigma_1,\sigma_2]$. Since $\Delta^i\times [0,1]$ is a geodesic space, in order to show that $[\sigma_1,\sigma_2]$ is $4L$-Lipschitz, it is sufficient to show that it is \emph{locally} $4L$-Lipschitz. This implies at once that
$\mathfrak r_L(\sigma)\geq \rho /2>0$. 

We fix a point $(x,t)\in \Delta^i\times [0,1]$. Therefore, 
we may restrict to consider points $(y,s)\in\Delta^i\times [0,1]$ such that 
$\sigma_j(x)$ and $\sigma_j(y)$ are contained in $B_\rho(\sigma(x))$ for $j=1,\,2$. For simplicity, we set $a=[\sigma_1(x), \sigma_2(x)](t)$, $b=[\sigma_1(y), \sigma_2(y)](s)$, $c=[\sigma_1(x), \sigma_2(y)](t)$, $b'=[\sigma_1(y), \sigma_2(y)](t)$ (see Figure~\ref{}). We now need to prove that $d(a,b)\leq 4L\cdot d((x,t),(y,s))$.
\begin{center}
\begin{figure}
\includegraphics[width=6cm]{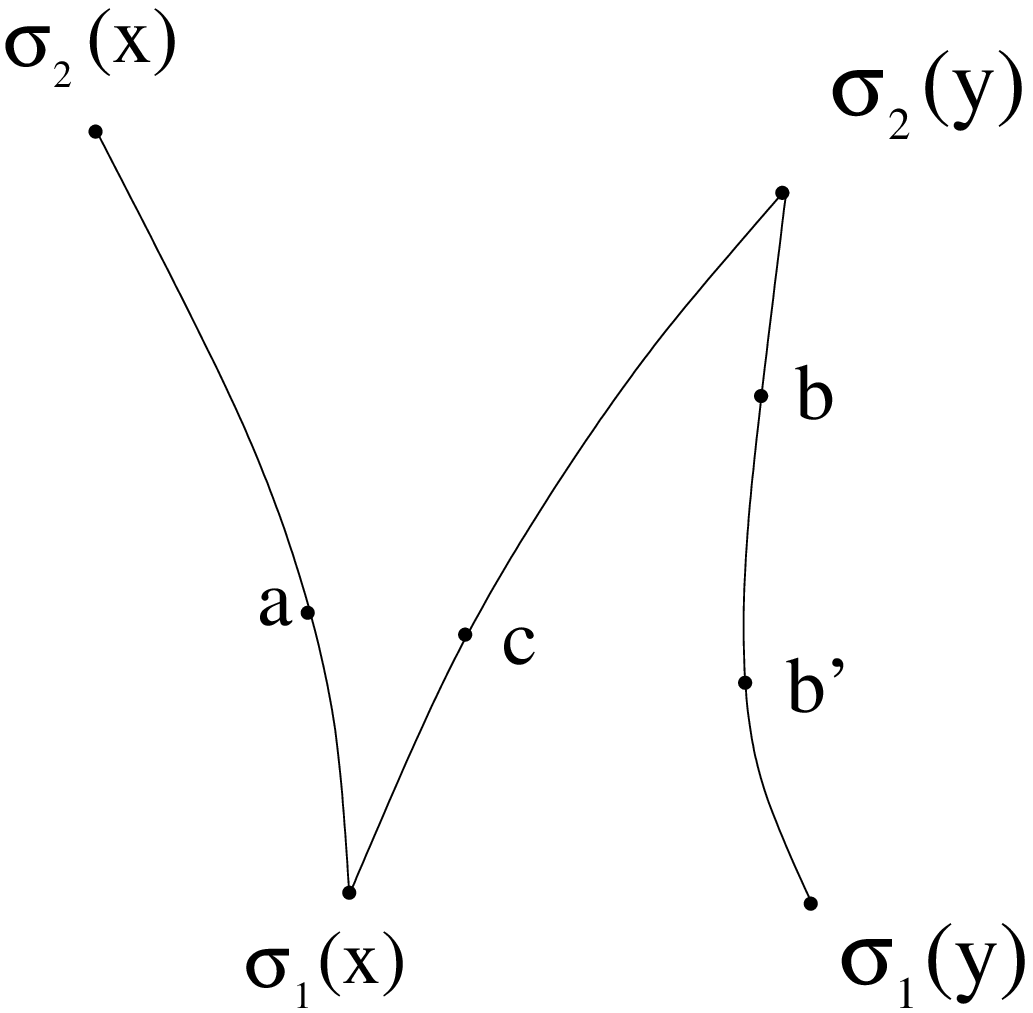}
\end{figure}
\end{center}
We have
\begin{equation}\label{stimaccia} 
d(a,b)\leq d(a,c)+d(c,b')+d(b',b)\ .
\end{equation}
Let us estimate the first term on the right side of the inequality above. 
The points $\sigma_1(x)$, $\sigma_2(x)$ and $\sigma_2(y)$ are the vertices of a triangle in the CAT$(\kappa)$ space $B_\rho(\sigma(x))$. 
For every point $p$ on the perimeter of the triangle with vertices $\sigma_1(x)$, $\sigma_2(x)$, $\sigma_2(y)$, we denote 
by $ \overline p$ the corresponding point in the comparison triangle in $S_\kappa$. 
The perimeter of this triangle is less than one half of the diameter of the comparison sphere $S_\kappa$ so, if we denote by
$d_c$ be the metric on $S_\kappa$, we have:
$$
d(a,c)\leq d_c(\overline{a},\overline{c})\leq d_c(\overline{\sigma_2(x)},\overline{\sigma_2(y)})=
d(\sigma_2(x),\sigma_2(y))\leq L\cdot d(x,y)\ ,
$$
where the first inequality is just the CAT$(k)$ inequality, and the second one follows from the fact that
the comparison triangle of vertices $ \overline{\sigma_1(x)}$, $ \overline{\sigma_2(x)}$, 
$ \overline{\sigma_2(y)}$ is  contained in a single hemisphere of the comparison sphere.
The very same argument applied to the triangle with vertices $\sigma_2(y)$, $\sigma_1(x)$ and $\sigma_1(y)$
shows that $d(c,b')\leq L\cdot d(x,y)$, while of course we have 
$d(b',b)=|t-s|d(\sigma_1(y),\sigma_2(y))\leq |t-s|d_\infty(\sigma_1,\sigma_2)\leq |t-s|\rho$.
Putting these inequalities together with~\eqref{stimaccia} we finally get
\begin{align*}
d(a,c)&\leq 
2Ld(x, y) + d_\infty(\sigma_1, \sigma_2) |t-s| \le 2L d(x, y) + \rho |t-s| \leq 
2L (d(x, y) + \rho |t-s|) \\ & \leq 2 \sqrt 2 L \sqrt{d(x, y)^2 + |t-s|^2} \leq 4L d((x, t),\, (y, s)), 
\end{align*}
where we used the fact that $\alpha + \beta \le \sqrt 2 \sqrt{\alpha^2 + \beta^2}$
for every $\alpha,\beta\in\R$.
We have thus proved that $\mathfrak r_L(\sigma)\geq \rho/2>0$. 

We now prove our claim about the Lipschitz constant of $ \mathfrak  r_L$. Let $\sigma \ne \tau \in S_*^{L}(\widetilde{N})$ be such that $d_\infty(\tau, \sigma) = r < \mathfrak r_L(\sigma)$. Let $\sigma_1,\,\sigma_2 \in 
S_i^{L}(\widetilde{N}) \cap B_{\mathfrak r_L(\sigma) - r}(\tau) \subseteq B_{\mathfrak r_L(\sigma)}(\sigma)$. 
By the last inclusion, it follows that $\lip [\sigma_1, \sigma_2] \le 4L$ hence, by definition of $ \mathfrak r_L$, $\mathfrak r_L(\tau) \ge \mathfrak r_L(\sigma) - r$. Put now $r \le \mathfrak r_L(\sigma) / 2$. 
It follows that $d_\infty(\tau, \sigma) = r \le \mathfrak r_L(\sigma) - r \le \mathfrak r_L(\tau)$. Hence $d_\infty(\sigma, \tau) \le \min\{\mathfrak r_L(\sigma),\, \mathfrak r_L(\tau)\}$.
 Therefore $ \mathfrak r_L(\tau) + r \ge \mathfrak r_L(\sigma) \ge \mathfrak r_L(\tau) - r$, hence $|\mathfrak r_L(\sigma) - \mathfrak r_L(\tau)| \le r$.


\end{proof}

We define a bounded function 
$$ \mathfrak r \: S_i^{\lip}(\widetilde N) \to \R \qquad \mathfrak r(\sigma) = \min \left\{\mathfrak r_{\lceil \lip (\sigma) \rceil +1}(\sigma),\, 1\right\} $$


Since $S_i^L(\widetilde{N})$ is closed in $S_i(\widetilde N)$ for every $L \in \N$, the sets $S_i^{L+1}(\widetilde{N}) \setminus S_i^L(\widetilde{N})$ 
are Borel in $S_i^\lip(\widetilde N)$ with respect to the $C^0$--topology. Therefore, the restriction of $ \mathfrak r$ to $S_i^\sm(\widetilde N)$ is Borel with respect to the $C^1$--topology.

\begin{dfn} \label{ } Given two subsets $Y$ and $Z$ of $S_i^\lip(\widetilde N)$, we say that $Y$ is \textbf{sufficiently dense} in $Z$ 
if for every $\sigma \in Z$ there is $ \overline \sigma \in Y \cap Z$ such that $d_\infty( \sigma, \overline \sigma) \le \mathfrak r(\sigma)$. 
\end{dfn} 

\begin{rem} \label{} If $Y$ is sufficiently dense in $Z$ we have that, for every $\sigma \in Z$, there is $ \overline \sigma \in Y \cap Z$ such that, if $ \lceil \lip (\sigma) \rceil + 1 \ge \lip (\overline \sigma)$, the straight homotopy $[\sigma, \overline \sigma]$ is $4$$(\lceil \lip \sigma \rceil +1$)--Lipschitz. 
\end{rem}

\begin{rem} \label{346t7eiywgudhs} Let $Z$ be a relatively compact subset of $S_i^\lip( \tilde N)$ (with respect to the $d_\infty$--metric). From the continuity of $ \mathfrak r_L$, $1 \le L \in \N$, it follows that $ \mathfrak r$ has positive infimum on $Z$. Since $Z$ is totally bounded, it admits a finite sufficiently dense subset $Y \subseteq Z$. 
%
\end{rem}




\subsection*{The 0-dimensional case}
Our proof of Theorem~\ref{pseudostraight:thm} constructs the map $\st_*$ inductively on the dimension of the simplices.
For technical reasons, we will prove a slightly more precise version of Theorem~\ref{pseudostraight:thm}, where in point (1) we require that
the homotopy $h_\sigma$ between the $i$-dimensional simplex $\sigma$ and $\st_i(\sigma)$ is constant on the subintervals
$[0,2^{-i-2}]\subseteq [0,1]$ and $[1-2^{-i-2},1]\subseteq [0,1]$.

Let $ \mathscr P = \{P_\alpha\}_{\alpha \in \N}$ be a locally finite partition of evenly covered Borel sets of $N$ whose diameter is bounded above by $1$, and fix a lift $ \widetilde P_\alpha \subseteq \widetilde N$ of $P_\alpha$ for every $\alpha$. In particular the diameter of $\widetilde P_\alpha$ is at most $2$. 
The union $\widetilde{F}=\bigcup_{\alpha \in \N} \widetilde P_\alpha$ is a Borel fundamental domain for the action of $\Lambda$ on $\widetilde N$.

Since every $\widetilde{P}_\alpha$ is relatively compact, by Remark \ref{346t7eiywgudhs} we have that $S_0(\widetilde{P}_\alpha)=\widetilde{P}_\alpha$ contains a finite sufficiently dense subset $\widehat{S}_0(\widetilde{P}_\alpha)$. 

We now define $\widehat{S}_0(\widetilde{F})$ as $\bigcup_{\alpha \in \N} \widehat{S}_0(\widetilde{P}_\alpha)$. For every $\overline{\sigma}\in \widehat{S}_0(\widetilde{F})$ we consider the set
$$
A_{\overline{\sigma}}'=\left\{ \sigma\in S_0(\widetilde{F})\, |\, d_\infty(\sigma,\overline{\sigma})\leq \mathfrak{r}( \sigma)\right\}\ .
$$

By definition, the $A'_{\overline{\sigma}}$ provide a cover of $S_0(\widetilde{F})$ by closed subsets. We may now order the
$A'_{\overline{\sigma}}$'s via a bijection with the natural numbers,
 and set $A_{\overline{\sigma}}=A'_{\overline{\sigma}}\setminus\bigcup_{\overline{\tau}<\overline{\sigma}} A'_{\overline{\tau}}$. In this way
we get a partition (possibly containing some empty subsets)  of
$S_0(\widetilde{F})$ into \emph{Borel} subsets (with respect to the $C^0$--topology). Finally, for every $\sigma\in S_0(\widetilde{F})$ we set
$$
\st_0(\sigma)=\overline{\sigma}\quad \textrm{if\ and\ only\ if}\quad \sigma\in A_{\overline{\sigma}}
$$
and extend the definition of $\st_0$ over the whole of $S_0(\widetilde{N})$ by $\Lambda$-equivariance. For every $\sigma\in S_0(\widetilde{N})$, we also set $h_\sigma$
to be the concatenation of the constant paths on $[0,1/4]$ and on $[3/4,0]$ with the constant speed parameterization of the 
path
$[\sigma,\st_0(\sigma)]_\sm$ on $[1/4,3/4]$.

Let us check that $\st_0$ satisfies the four conditions required in the statement of Theorem~\ref{pseudostraight:thm}. 
The fact that $\st_0$ and the map $\sigma\mapsto h_\sigma$ are equivariant is obvious. Moreover, since
$d_\infty(\sigma,\st_0(\sigma))\leq 1$ for every $\sigma$, the homotopy $h_\sigma$ is always $2\lip (\xi)$--Lipschitz (see \eqref{o378yqirhus}). The family
$\widehat{S}_0(\widetilde{N})$ is locally finite by construction. Finally, the map $\st_0$ is clearly Borel, so the map
$\sigma\mapsto h_\sigma$ is Borel as well, being continuous on every Borel set $A_{\overline{\sigma}}$ defined above.

\subsection*{The inductive step}
We now suppose that a map $\st_k$ satisfying the properties stated in Theorem~\ref{pseudostraight:thm}
has been constructed for every $k\leq i$, and proceed
with the inductive step. We need two technical lemmas: 

\begin{lem} \label{the lemma below} Let $M$ be a smooth manifold. 
\begin{enumerate}
    \item Let $f \: \partial \Delta^i \to M$ be such that its restriction on every face of $\partial \Delta^i$ is smooth. Then $f$ is smooth.

    \item Let $F \: \partial \Delta^i \times I \to M$ be such that $F \circ (\partial_j \times \id_I)$ is smooth for every $0 \le j \le i$. Then $F$ is smooth.
\end{enumerate}
\end{lem}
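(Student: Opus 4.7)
The plan is to reduce the problem to real-valued maps via a smooth Whitney embedding $M \hookrightarrow \R^K$ with tubular neighborhood retraction $\rho \colon \mathcal T(M) \to M$, then to construct a $C^1$ extension of $f$ to a neighborhood of $\partial \Delta^i$ in $\R^i$ by an inclusion--exclusion formula, and finally compose with $\rho$ to land back in $M$.

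First, for each face $F_j$ of $\partial \Delta^i$ I fix a $C^1$ extension $\tilde f_j \colon U_j \to M$ of $f|_{F_j}$ to an open neighborhood $U_j \subset \R^i$ of $F_j$, which exists by hypothesis. Around any $p \in \partial \Delta^i$ belonging to the relative interior of a subface $F_S := \bigcap_{j \in S} F_j$, I pick local coordinates $(y, z) \in \R^{|S|} \times \R^{i-|S|}$ centered at $p$ in which each face $F_j$ with $j \in S$ is locally cut out by $\{y_j = 0\}$, and define
\[
 g_p(y, z) \; := \; \sum_{\varnothing \ne T \subseteq S} (-1)^{|T|+1} \, \hat f_T(y_{\setminus T}, z) \; \in \; \R^K,
\]
where $y_{\setminus T}$ is obtained from $y$ by zeroing the coordinates indexed by $T$, and $\hat f_T$ is any smooth $\R^K$-valued extension of $f|_{F_T}$ near $p$ (one may take $\hat f_T = \tilde f_j$ for any $j \in T$, since on $F_T$ it coincides with $f|_{F_T}$).

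The key verification I plan to carry out is that $g_p$ restricts to $f$ on each $F_l \cap V_p$ with $l \in S$: when $y_l = 0$, the argument $(y_{\setminus T}, z)$ lies on $\partial \Delta^i$, so $\hat f_T(y_{\setminus T}, z) = f(y_{\setminus T}, z)$ independently of the choice of $\hat f_T$, and the subsets $T \subseteq S$ with $|T| \ge 2$ and $l \in T$ cancel pairwise with the subsets $T' = T \setminus \{l\}$ (they give opposite signs and, when $y_l = 0$, the same argument $y_{\setminus T} = y_{\setminus T'}$), leaving only the contribution $T = \{l\}$, which is $f(y, z)$. Moreover $g_p$ is manifestly $C^1$ as a finite sum of $C^1$ functions.

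Finally I globalize the construction: choosing a locally finite open cover $\{V_p\}$ of a small neighborhood $V$ of $\partial \Delta^i$ in $\R^i$ and a subordinate smooth partition of unity $\{\phi_p\}$, I set $g := \sum_p \phi_p\, g_p \colon V \to \R^K$. Since $\sum_p \phi_p \equiv 1$ and each $g_p$ agrees with $f$ on $V_p \cap \partial \Delta^i$, the map $g$ extends $f$; for $V$ small enough, $g(V) \subset \mathcal T(M)$ by continuity, and $\rho \circ g$ is the desired $C^1$ extension. Part (2) is proved identically, applied to $\partial \Delta^i \times I \subset \R^{i+1}$, whose top-dimensional faces are $F_j \times I$: the local coordinate setup and the inclusion--exclusion formula carry over verbatim upon appending the extra coordinate from $I$ to the $z$-variables. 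The main obstacle I expect is precisely the combinatorial cancellation that makes the inclusion--exclusion formula restrict correctly to $f$ on each face, everything else being essentially a bookkeeping of choices and the use of the tubular neighborhood.
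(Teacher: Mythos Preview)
Your argument is correct. The inclusion--exclusion formula you write down is the standard device for extending a smooth map off a corner of a polytope, and your pairing $T \leftrightarrow T \setminus \{l\}$ (for $l \in T$, $|T| \ge 2$) does isolate the single surviving term $T = \{l\}$ once $y_l = 0$; the Whitney embedding and the tubular retraction $\rho$ then bring the $\R^K$-valued extension back into $M$ without difficulty. The one point worth making explicit is that your choice $\hat f_T := \tilde f_j$ really is defined on a full neighborhood in $\R^i$ (not just on the affine span of $F_j$), which you can arrange by precomposing the given extension of $f|_{F_j}$ with the linear projection onto that span --- but this is implicit in your setup.

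The paper proceeds differently: for (1) it simply cites Lee's textbook without reproducing the argument, and for (2) it observes that $\partial \Delta^i \times I$ may be realized, up to diffeomorphism, as a truncated $\Delta^{i+1}$, so that (2) reduces to the same lemma applied to that polytope. Your approach is more self-contained and handles (1) and (2) uniformly: the extra $I$-coordinate just enlarges the $z$-block in your local model, so the same combinatorics run verbatim and no separate geometric reduction is needed. In effect you have written out the content behind the citation; the paper's shortcut for (2) trades a repetition of the corner-extension argument for a polytope identification that one would otherwise have to verify.
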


\begin{proof} See \cite[Lemma 16.8, p. 420]{Lee} for a proof of (1). Up to diffeomorphism, we may see $\partial \Delta^i \times I$ as a truncated $\Delta^{i+1}$. Hence (2) follows as well.
\end{proof}

\begin{lem} \label{approssimazione con roba liscia} For every $L > 0$ and every smooth map $s \: \partial \Delta^i \to N$ the following inclusion is  dense (with respect to the $d_\infty$--metric):
$$ \{\sigma \in S_i^{\sm}(N) \: \sigma {}_{\big| \partial \Delta^i} = s, \lip(\sigma) < L \} \hookrightarrow \{\sigma \in S_i^{\lip}(N) \: \sigma {}_{\big| \partial \Delta^i} = s, \lip(\sigma) < L \}. $$
\end{lem}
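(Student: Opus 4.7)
The strategy is to reduce first to a Euclidean target via an isometric embedding, and then approximate using a boundary-adapted mollifier. Since $\sigma(\Delta^i)$ is compact, Nash's isometric embedding theorem provides an isometric embedding of an open neighborhood $W\subseteq N$ of $\sigma(\Delta^i)$ into some $\R^k$, and $W$ admits a smooth tubular neighborhood $T\subseteq \R^k$ with smooth nearest-point projection $\pi\colon T\to W$. By shrinking $T$, we may arrange that $\pi$ is $(1+\eta)$-Lipschitz from $(T, d_{\R^k})$ to $(W, d_N)$ for any prescribed $\eta>0$. Viewing $\sigma$ as a map into $\R^k$, its Euclidean Lipschitz constant is at most $\lip(\sigma)<L$. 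Thus it suffices to produce a $C^1$-map $\bar\sigma\colon \Delta^i\to T$ with $\bar\sigma|_{\partial\Delta^i}=s$, Euclidean Lipschitz constant $<L/(1+\eta)$, and arbitrarily $d_\infty$-close to $\sigma$; then $\pi\circ\bar\sigma$ provides the required approximation in $N$.

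For the Euclidean construction, I would fix a smooth function $\phi\colon \Delta^i\to [0,+\infty)$ with $\phi^{-1}(0)=\partial\Delta^i$ (for instance the product of the barycentric coordinates), a smooth extension $\tilde s$ of $s$ to an open neighborhood $V$ of $\Delta^i$ in $\R^i$, and a Lipschitz extension $\hat\sigma\colon V\to\R^k$ of $\sigma$ that coincides with $\tilde s$ on $V\setminus\Delta^i$. For a standard smooth mollifier $\rho$ on $\R^i$ supported in $B_R(0)$ and for $\epsilon>0$, set
\[
\bar\sigma_\epsilon(x) := \int_{\R^i}\hat\sigma\bigl(x-\epsilon\phi(x)y\bigr)\rho(y)\,dy.
\]
Since $\phi$ vanishes on $\partial\Delta^i$, the integrand is constant in $y$ there, giving $\bar\sigma_\epsilon|_{\partial\Delta^i}=\hat\sigma|_{\partial\Delta^i}=s$. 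The substitution $u=x-\epsilon\phi(x)y$ writes $\bar\sigma_\epsilon$ on the interior of $\Delta^i$ as a convolution of $\hat\sigma$ against a smooth $x$-dependent kernel, so $\bar\sigma_\epsilon$ is $C^\infty$ there. A direct estimate yields
\[
\lip(\bar\sigma_\epsilon)\le\lip(\hat\sigma)\bigl(1+\epsilon\|\nabla\phi\|_\infty R\bigr),\qquad \|\bar\sigma_\epsilon-\sigma\|_\infty\le\lip(\hat\sigma)\,\epsilon\,\|\phi\|_\infty R.
\]
For $\epsilon$ small, $\bar\sigma_\epsilon$ has Euclidean Lipschitz constant $<L/(1+\eta)$ and is arbitrarily $d_\infty$-close to $\sigma$.

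The main obstacle will be verifying that $\bar\sigma_\epsilon$ admits a $C^1$-extension to a neighborhood of $\Delta^i$ in $\R^i$, which is required by the paper's definition of $S_i^{\sm}(N)$. Since $\hat\sigma$ coincides with the smooth $\tilde s$ outside $\Delta^i$, the integral formula defines a $C^\infty$ function on $V\setminus\Delta^i$; the task then reduces to checking $C^1$-matching of $\bar\sigma_\epsilon$ across $\partial\Delta^i$. The tangential derivatives agree automatically (both equal $\nabla s$ on $\partial\Delta^i$). The matching of the normal derivatives is the delicate point: it is handled by a careful averaging argument exploiting that the bandwidth $\epsilon\phi(x)$ degenerates to zero on $\partial\Delta^i$, so that the kernel effectively collapses to a Dirac there and forces the interior limit of the normal derivative of $\bar\sigma_\epsilon$ to coincide with the normal derivative of the exterior mollification of $\tilde s$. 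Exploiting the freedom to prescribe the normal derivative of $\tilde s$ along $\partial\Delta^i$, one arranges this matching and thereby obtains the required $C^1$-extension.
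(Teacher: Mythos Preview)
Your reduction to the Euclidean target via Nash's embedding and a tubular retraction is the same as the paper's, and your Lipschitz and $d_\infty$ estimates for the variable-bandwidth mollifier are correct. The genuine problem is the step you flag yourself: $\bar\sigma_\epsilon$ is in general \emph{not} $C^1$ up to $\partial\Delta^i$, so your handling of the ``main obstacle'' does not go through.

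Here is a concrete failure. Take $i=1$, $N=\R$, $s\equiv 0$, and $\sigma(x)=x\sin(\log x)$ near $x=0$ (so $\sigma$ is Lipschitz with $\sigma(0)=0$, but not differentiable at $0$). With $\phi(x)=x(1-x)$, for small $x>0$ one has $x-\epsilon\phi(x)y=x\,z_y$ with $z_y:=1-\epsilon(1-x)y$, hence
\[
\frac{\bar\sigma_\epsilon(x)}{x}=\int z_y\,\sin\bigl(\log x+\log z_y\bigr)\rho(y)\,dy
= A(x)\sin(\log x)+B(x)\cos(\log x),
\]
where $A(x)\to\int(1-\epsilon y)\cos\log(1-\epsilon y)\,\rho(y)\,dy\neq 0$ as $x\to 0$. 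Thus $\bar\sigma_\epsilon(x)/x$ oscillates and $\bar\sigma_\epsilon'(0)$ does not exist. Note that for small $\epsilon$ the averaging ball $B_{\epsilon\phi(x)R}(x)$ lies entirely inside $\Delta^i$, so the exterior extension $\tilde s$ never enters the formula from the inside; the ``freedom to prescribe the normal derivative of $\tilde s$'' therefore cannot repair the missing interior limit. The heuristic ``the kernel collapses to a Dirac'' would at best recover $\nabla\sigma(x_0)$, which need not exist.

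The paper avoids this obstruction by a different two-step construction: first mollify $\sigma$ with a \emph{constant} bandwidth to obtain a smooth $\sigma'$ with $\lip(\sigma')<L$ and $d_\infty(\sigma',\sigma)$ small, and then correct the boundary by a radial collar interpolation that uses only the two \emph{smooth} ingredients $\sigma'$ and $s=\sigma|_{\partial\Delta^i}$. Since neither piece of the collar formula involves the merely Lipschitz $\sigma$ in the interior, smoothness up to the boundary is immediate, and the Lipschitz constant is controlled because $\|s(y)-\sigma'(y)\|$ on $\partial\Delta^i$ is small. If you want to keep a mollification-only argument, you would first have to replace $\sigma$ in a thin collar by a smooth map agreeing with $s$ on $\partial\Delta^i$ (controlling the Lipschitz constant there), and only then mollify; at that point your method essentially reduces to the paper's.
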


\begin{proof} We first prove the theorem in the case $N = \R^n$. Let $\sigma \: \Delta^i \to \R^n$ be an $L$--Lipschitz simplex and let $ \varepsilon> 0$. We extend $\sigma$ to an $L$--Lipschitz map over the whole $\R^i$ by composing it on the right with a retraction $\R^i \to \Delta^i$. Then, using convolution, we find a smooth $L$--Lipschitz simplex $\sigma' \: \Delta^i \to \R^n$ such that $d_\infty(\sigma, \sigma') < \varepsilon$. 

Now, suppose that $\sigma {}_{\big| \partial \Delta^i} \: \partial \Delta^i \to \R^n$ is smooth. Let $\xi \: [0 , 1] \to [0, 1]$ be a smooth non-decreasing surjective map, locally constant on $\{0, 1\}$, such that $\lip \xi$ is close to $1$, and $\|\xi - \id\|_\infty$ is small. We suppose that the barycenter of $\Delta^i$ is the origin of $\R^i$. Then we define a smooth simplex $ \overline \sigma$ as follows:

$$
\overline \sigma(x) = \left\{
\begin{array}{ll} 
[\sigma(x), \sigma'(x)](\xi(\frac{s}{\varepsilon} + \frac{ \varepsilon - 1}{\varepsilon})) \hfill \textrm{ if } x = sy,\, y \in \partial \Delta^i,\, s \in [1- \varepsilon, 1] \\ 
\sigma'(\xi( \frac{s y}{1- \varepsilon})) \hfill \textrm{ if } x = sy,\, y \in \partial \Delta^i,\, s \in [0, 1- \varepsilon] \\ 
\end{array} 
\right. 
$$
For $\|\xi - \id_{[0,\,1]}\|_\infty$ sufficiently small, and $\lip(\xi)$ sufficiently close to $1$, it is easily seen that $\|\overline \sigma - \sigma\|_\infty \le 2 \varepsilon$ and $\lip( \overline \sigma) \le \lip (\sigma) + \varepsilon$. 

Now we consider the general case. By the Nash embedding theorem, there is $n \in \N$ and a smooth isometric embedding $i \: N \hookrightarrow \R^n$. This means that the length of a path $\gamma$ in $N$ coincides with the length of the path $i \circ \gamma$ in $\R^n$. 

We identify $N$ and $i(N)$, and we consider a tubular neighborhood $U$ of $N$ in $\R^n$, together with a smooth retraction $r \: U \to N$. Let $\sigma \: \Delta^i \to N \hookrightarrow \R^n$ be a Lipschitz simplex with smooth boundary and $\lip(\sigma) \leq L - 2\delta$, for some $\delta > 0$. Fix $ \varepsilon > 0$. Up to restricting the tubular neighborhood $U$ of $N$, we can assume that $ \lip (r) < \frac{L}{L - \delta}$, and that $d_\infty(x, r(x)) < \varepsilon/2$ for every $x \in U$. Let $ \overline \sigma \: \Delta^i \to U \subseteq \R^n$ be a smooth simplex such that $\lip ( \overline \sigma) \leq L - \delta$, $d_\infty(\sigma, \overline \sigma) < \varepsilon/2$ and $ \overline \sigma {}_{\big| \partial \Delta^i} = \sigma {}_{\big| \partial \Delta^i}$. Then, $r \circ \overline \sigma \: \Delta^i \to N$ is a smooth simplex, with $\lip (r \circ \overline \sigma) \le \lip(r) \lip( \overline \sigma) < \frac{L}{L - \delta} (L - \delta) = L$. Moreover, $\sigma {}_{\big| \partial \Delta^i} = r \circ \overline \sigma {}_{\big| \partial \Delta^i}$. Finally:
$$ d_\infty(\sigma, r \circ \overline \sigma) \le d_\infty(\sigma, \overline \sigma) + d_\infty( \overline \sigma, r \circ \overline \sigma) \le \varepsilon/2 + \varepsilon/2 = \varepsilon. $$
\end{proof}

Let $\sigma$ be an element of $S_{i+1}^\lip(\widetilde{N})$. We first homotope $\sigma$ into an $(i+1)$-simplex
with pseudostraight faces as follows. For every $j=0,\ldots, i+1$, associated to the face $\partial_j\sigma$ there are the pseudostraight
$i$-simplex $\st_{i}(\partial_j\sigma)$ and the homotopy $h_{\partial_j \sigma}$.
We can think, up to a translation in $\R^n$, that the barycenter of the standard $(i+1)$-simplex $\Delta^{i+1}$ 
is $0 \in \R^{i+1}$. Let $\Delta' := \frac{1}{2} \Delta^{i+1}$. 
We know by induction that the homotopies $h_{\partial_j\sigma}$ and $h_{\partial_k\sigma}$ coincide on $(\partial_j\Delta^{i+1})\cap (\partial_k\Delta^{i+1})$, so
a global homotopy
$h_{ \partial \sigma}\colon \partial\Delta^{i+1}\times I\to \widetilde{N}$ is defined. 
Moreover, we know by induction that this homotopy is constant on the time intervals $[0, \,2^{-i-2}]$ and $[1 - 2^{-i-2},\, 1]$. 

We define a homotopy $h'_\sigma$ as follows: \begin{enumerate} 
	\item[$(H1)$] In the time interval $[0,\, 2^{-i-3}]$ the homotopy is constantly $\sigma$.
	\item[$(H2)$] In the time interval $[2^{-i-3}, \,2^{-i-2}]$, we construct $h_\sigma'$ by composing $\sigma$ with $H$ on the left: $$ \Delta^{i+1} \times [2^{-i-3}, \,2^{-i-2}] \overset{H}{\longrightarrow } \Delta^{i+1} \overset{\sigma}{\longrightarrow} \widetilde N, $$ where $H$ is 
a smooth (rescaled) homotopy between the identity and a map which sends $\Delta'$ onto $\Delta$ and retracts $ \Delta \setminus \Delta'$ onto $ \partial \Delta$, and which is locally the constant homotopy for $t$ in a neighborhood of $\{2^{-i-3}, \,2^{-i-2}\}$. The Lipschitz constant of the homotopy $\sigma \circ H$ is at most $\lip(H) \cdot \lip (\sigma)$. The boundary of $\sigma$ has been left unchanged so far.
    \item[$(H3)$] Now we consider the time interval $[2^{-i-2},\, 1 - 2^{-i-2}]$. On the boundary $ \partial \Delta = 2 \partial \Delta'$ we have a preassigned homotopy $h_{ \partial \sigma}$. Let $s \: \overline{\Delta \setminus \Delta'} \to [1/2, 1]$ be such that $x \in s \partial \Delta$, for $x \in \overline{\Delta \setminus \Delta'}$, and let $\hat s \: \overline{\Delta \setminus \Delta'} \to [1/2, 1]$ be the composition of $s$ with a non-decreasing smooth surjective map: $[1/2, 1] \to [1/2, 1]$ which is constant on a neighborhood of $\{1/2,\,1\}$. We define $h_\sigma'$ in the time interval $[2^{-i-2},\, 1 - 2^{-i-2}]$ by:
$$ \overline{\Delta \setminus \Delta'} \times [2^{-i-2},\, 1 - 2^{-i-2}] \ni (x, t) \mapsto h_{ \partial \sigma} \Big(x/ s(x), \frac{(2 \hat s(x)-1)}{1 - 2^{-i-1}} t\Big), $$ 
constant on $\Delta'$. This homotopy is the composition of $h_{\partial \sigma}$ and a Lipschitz map: $\overline{\Delta \setminus \Delta'} \times I \to \overline{\Delta \setminus \Delta'} \times I$. Therefore its Lipschitz constant is uniformly bounded, up to a multiplicative constant, only by the Lipschitz constant of $h_{\partial \sigma}$, which in turn is uniformly bounded by the Lipschitz constant of $\sigma$ by induction. Similarly, it is proven by induction and by Lemma \ref{the lemma below} that the homotopy is smooth, if every face of $\sigma$ is smooth.
\end{enumerate}

Let us now summarize what we have obtained so far. For any given $(i+1)$-simplex $\sigma$ we have produced a homotopy $h'_\sigma \: \Delta^{i+1} \times I \to \tilde N$ between
$\sigma$ and an $(i+1)$-simplex $\sigma'$ with pseudostraight faces. Our choice for $h'_\sigma$ is obviously $\Lambda$-equivariant. 
Moreover, by point (H1), (H2) and (H3) above, the Lipschitz constant of $h'_\sigma$ is bounded in terms of the Lipschitz constant of $\sigma$, and $h_\sigma'$ is smooth if $\sigma$ is smooth. 

We now need to approximate $\sigma'$
with a suitably chosen pseudostraight $(i+1)$-dimensional simplex. In order to do so, we begin with the following lemma. We denote by $\widehat{S}_i(\widetilde{N})$
the set $\st_i(S_i^\lip(\widetilde{N})) \subset S_i^\sm(\widetilde{N})$, and we set $\widehat{S}_i^L(\widetilde{N}) = \widehat{S}_i(\widetilde{N}) \cap S_i^L(\widetilde{N})$ for every $L\in\R$.
We will concentrate our attention on simplices whose faces are pseudostraight, so we reserve a symbol for this set:
$$
B\widehat{S}_{i+1}(\widetilde{N}):=\{\sigma\in S_{i+1}^\lip(\widetilde{N})\, :\, \partial_j\sigma\in \widehat{S}_i(\widetilde{N})\ \textrm{for\ every}\ j\}\ .
$$
Moreover, if $\tau\in B\widehat{S}_{i+1}(\widetilde{N})$, then we set 
$$ \mathscr{B}_{\tau} := \left\{\tau'\in S_{i+1}^\lip(\widetilde{N}) \: \tau'{}_{|\partial \Delta} = \tau {}_{| \partial \Delta} \right\} \ . $$

\begin{lem} \label{26twb45} 
There exists a $\Lambda$--invariant family $\widehat{S}_{i+1}(\widetilde{N})\subset S_{i+1}^\sm(\widetilde{N}) \subset S_{i+1}^\lip(\widetilde{N})$ which satisfies the following properties: 
\begin{enumerate}
	\item for every $L \in \N$, the family $\widehat{S}_{i+1}^L(\widetilde{N})=\widehat{S}_{i+1}(\widetilde{N})\cap S_{i+1}^L(\widetilde{N})$ 
	is locally finite.
\item If $\tau\in B\widehat{S}_{i+1}(\widetilde{N})$,
then $\widehat{S}_{i+1}^{L}(\widetilde{N})$ is sufficiently dense in $\mathscr{B}_{\tau} \cap S_{i+1}^{<L}(\widetilde{N})$ for every $L\in\N$, where $S_{i+1}^{<L}(\widetilde{N}) := \left\{\sigma \in S_{i+1}(\widetilde{N}) \: \lip \sigma < L \right\}$.
\end{enumerate} 
\end{lem}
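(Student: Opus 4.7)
The plan is to construct $\widehat{S}_{i+1}(\widetilde{N})$ one pseudostraight boundary at a time, exploiting the inductive properties of $\widehat{S}_i(\widetilde{N})$. For any $\tau\in B\widehat{S}_{i+1}(\widetilde{N})$, set $b:=\tau|_{\partial\Delta}$ and, for every $L\in\N$,
$$\mathcal{F}_{b,L}\;:=\;\mathscr{B}_\tau\cap S_{i+1}^{<L}(\widetilde{N})\;=\;\{\sigma\in S_{i+1}^\lip(\widetilde{N})\: \sigma|_{\partial\Delta}=b,\;\lip\sigma<L\}.$$
First I would verify that $\mathcal{F}_{b,L}$ is relatively compact in the $d_\infty$--topology: every $\sigma\in\mathcal{F}_{b,L}$ satisfies $\sigma(e_0)=b(e_0)$ and, being $L$--Lipschitz, takes values in the closed ball of radius $L\cdot\diam(\Delta^{i+1})$ about $b(e_0)$, which is compact by Hopf--Rinow since $\widetilde{N}$ is complete; Arzelà--Ascoli then yields relative compactness. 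On $\mathcal{F}_{b,L}$ we have $\lceil\lip\sigma\rceil+1\le L+1$, so $\mathfrak{r}(\sigma)$ depends only on the finitely many functions $\mathfrak{r}_1,\dots,\mathfrak{r}_{L+1}$, each continuous and strictly positive by Lemma~\ref{ohnfjcklw}. Hence $\mathfrak{r}$ admits a uniform lower bound $\delta_{b,L}>0$ on $\mathcal{F}_{b,L}$.

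Using total boundedness, I cover $\mathcal{F}_{b,L}$ by finitely many $d_\infty$--balls $B(\sigma_k,\delta_{b,L}/4)$; since $b$ is smooth (by the inductive smoothness of pseudostraight simplices), Lemma~\ref{approssimazione con roba liscia} allows me to replace each $\sigma_k$ by a smooth $\sigma_k'\in\mathcal{F}_{b,L}$ with $d_\infty(\sigma_k,\sigma_k')<\delta_{b,L}/4$. A triangle-inequality check then shows that $Y_{b,L}:=\{\sigma_1',\dots,\sigma_N'\}$ is a finite family of smooth simplices, sufficiently dense in $\mathcal{F}_{b,L}$. To prepare for the local-finiteness requirement I would build these families nested, $Y_{b,L}\subseteq Y_{b,L+1}$, arranging that every simplex added at stage $L+1$ has $\lip\ge L-1$: this is legitimate because $\{\sigma\in\mathcal{F}_{b,L+1}\: \lip\sigma<L-1\}\subseteq\mathcal{F}_{b,L-1}$ is already covered by $Y_{b,L-1}$, so only centers in the Lipschitz annulus $[L-1,L+1)$ are needed. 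Setting $Y_b:=\bigcup_L Y_{b,L}$, the slab $Y_b\cap S_{i+1}^M(\widetilde{N})$ receives contributions only from stages with $L\le M+1$, and is therefore finite.

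To globalize $\Lambda$--equivariantly, I would choose a transversal $\mathcal{B}^{\ast}$ for the $\Lambda$--action on the (countable) set of pseudostraight boundaries $\{\tau|_{\partial\Delta}\: \tau\in B\widehat{S}_{i+1}(\widetilde{N})\}$ and declare
$$\widehat{S}_{i+1}(\widetilde{N})\;:=\;\bigcup_{b\in\mathcal{B}^{\ast}}\bigcup_{\lambda\in\Lambda}\lambda\cdot Y_b.$$
Property~(2) then follows from the sufficient density of each $Y_b$ together with the $G$--invariance of $\mathfrak{r}$. For property~(1), given compact $K\subseteq\widetilde{N}$ and $L\in\N$, any $\sigma\in\widehat{S}_{i+1}^L(\widetilde{N})$ meeting $K$ has image in a fixed compact enlargement $K'$ of $K$; its faces therefore lie in $\widehat{S}_i^L(\widetilde{N})$ with image in $K'$, so by the inductive local finiteness only finitely many pseudostraight boundaries can arise, and each contributes finitely many elements of $Y_b\cap S_{i+1}^L(\widetilde{N})$. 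The main obstacle I expect is the tension between sufficient density (which demands approximating arbitrarily low-Lipschitz simplices in each $\mathcal{F}_{b,L}$) and local finiteness of each slab $\widehat{S}_{i+1}^L$ (which forbids accumulating unboundedly many simplices of bounded Lipschitz constant sharing a fixed boundary); the nested construction of the $Y_{b,L}$'s with a lower Lipschitz threshold on the newly added centers is precisely the device that reconciles the two.
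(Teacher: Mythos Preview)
Your proposal is correct and follows essentially the same strategy as the paper's proof: Arzel\`a--Ascoli for relative compactness, the positive infimum of $\mathfrak r$ on relatively compact sets (Remark~\ref{346t7eiywgudhs}), extraction of finite sufficiently dense subsets made smooth via Lemma~\ref{approssimazione con roba liscia}, and $\Lambda$--equivariant assembly exploiting the inductive local finiteness of $\widehat S_i$. The paper organizes the construction by the region $\widetilde P_\alpha$ containing the first vertex together with a Lipschitz annulus $[L-1,L)$ and only then splits by boundary, whereas you index first by the pseudostraight boundary $b$ and control the $L$--slabs through your nested $Y_{b,L}$'s with a lower Lipschitz threshold on newly added centers; this is a bookkeeping difference rather than a different argument.
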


\begin{proof} Let $ \{\widetilde{P}_\alpha\}_{\alpha\in \N}$ be the locally finite family of Borel subsets of $\widetilde{N}$ 
fixed above, so that $\widetilde{F}=\bigcup_{\alpha\in\N} \widetilde{P}_\alpha$ is a Borel fundamental domain for the action
of $\Lambda$ on $\widetilde{N}$.

Let $L \in \N$. We define a subset $ \mathfrak P_\alpha^{[L-1, L)}\subseteq B\widehat{S}_{i+1}^\lip(\widetilde{N})$ as follows:
$\sigma\in \mathfrak P_\alpha^{[L-1, L)}$ if and only if $\lip(\sigma)\in [L-1, L)$ and the first vertex of $\sigma$ lies in 
$ \widetilde P_\alpha$.
This set is totally bounded with respect to the metric $d_\infty$. 
Indeed, $ \mathfrak P_\alpha^{[L-1, L)}$ is equicontinuous (because $L$ is fixed) and pointwise relatively compact 
(because the images of simplices in $ \mathfrak P_\alpha^{[L-1, L)}$ are uniformly bounded, and bounded subsets of $\widetilde{N}$ are relatively compact). 
By Arzelà--Ascoli's theorem, $\mathfrak P_\alpha^{[L-1, L)}$ is relatively compact in $S_{i+1}(\widetilde N)$, hence totally bounded. 
It follows easily from the local finiteness of 
$\widehat{S}_{i}^L(\widetilde{N})$ 
that the \emph{geometric boundary} of $\mathfrak P_\alpha^{[L-1, L)}$:
$$ \partial_g (\mathfrak P_\alpha^{[L-1, L)}) := \left\{\sigma {}_{| \partial \Delta} \: \sigma \in \mathfrak P_\alpha^{[L-1, L)} \right\} 
$$
is finite. For every $s\in  \partial_g (\mathfrak P_\alpha^{[L-1, L)})$ the set of elements of $\mathfrak P_\alpha^{[L-1, L)}$ whose restriction
to $\partial \Delta^{i+1}$ coincides with $s$ is totally bounded, so it admits a finite sufficiently dense subset $D_s^{[L-1, L)}$. By Lemma \ref{approssimazione con roba liscia}, we can require that the elements in $D_s^{[L-1, L)}$ are smooth. By taking the union of the $D_s^{[L-1, L)}$'s over
$s\in  \partial_g (\mathfrak P_\alpha^{[L-1, L)})$ we thus get a finite set $D_\alpha^{[L-1, L)}$ of simplices in $\mathfrak P_\alpha^{[L-1, L)}$, which approximates nicely
all the elements of $\mathfrak P_\alpha^{[L-1, L)}$.
We put
$$ \widehat{S}_{i+1} (\widetilde{N}):= 
\bigsqcup_{\lambda \in \Lambda, \, L \in \N, \, \alpha \in \N} \lambda\cdot  D^{[L-1, L)}_\alpha\ . $$

%
It is easy to check that both conditions of the lemma are satisfied by construction.
\end{proof} 
For every 
$\overline{\sigma}\in \widehat{S}_{i+1}(\widetilde{N})$ we set 
$$
A'_{\overline{\sigma}}=\left\{\sigma \in S_{i+1}^\lip(\widetilde{N}) \, :\, \sigma|_{\partial\Delta^{i+1}}=\overline{\sigma}|_{\partial\Delta^{i+1}},\, \lip(\sigma)   \geq \lfloor \lip(\overline{\sigma}) \rfloor\, , 
d_\infty(\sigma,\overline{\sigma})\leq \mathfrak{r}(\sigma) \right\}\ .
$$

By the previous lemma, the $A'_{\overline{\sigma}}$'s cover $B\widehat{S}_{i+1}^\lip(\widetilde{N})$. Being locally finite, the family $\widehat{S}_{i+1}(\widetilde{N})$ is countable. Moreover, the map $\sigma \mapsto \lip (\sigma)$ is a Borel function on $S_{i+1}^\lip(\widetilde{N})$ (with respect to the $C^0$--topology), and this readily implies that each $A'_{\overline{\sigma}}$
is Borel. Therefore, just as we did in the 0-dimensional case, we can choose Borel subsets $A_{\overline{\sigma}}\subseteq A'_{\overline{\sigma}}$
which provide a partition of $B\widehat{S}_{i+1}^\lip(\widetilde{N})$ (possibly with some empty sets). 

We are now ready to define the map $\st_{i+1}$. 
So, let us take $\sigma\in S_{i+1}^\lip(\widetilde{N})$. We have already associated to $\sigma$
a simplex $\sigma'\in B\widehat{S}_{i+1}^\lip(\widetilde{N})$.
Also recall that a homotopy $h'_\sigma$ between $\sigma$ and $\sigma'$ has also been constructed in such a way that
$\lip(h'_\sigma)\leq c_{i+1}(\lip(\sigma))$ for some fuction $c_{i+1}\colon [0,\infty)\to [0,\infty)$
(so, in particular, the inequality $\lip(\sigma')\leq c_{i+1}(\lip(\sigma))$ also holds). We now define $\st_{i+1}(\sigma)$ as follows:
$$
\pst_{i+1}(\sigma)=\overline{\sigma}\quad \textrm{if\ and\ only\ if}\ \sigma'\in A_{\overline{\sigma}},
$$
and complete the homotopy given in points (H1), (H2), (H3) above with two more steps:

\begin{enumerate}
    \item[$(H4)$] In the time interval: $[1 - 2^{-i-2}, 1- 2^{-i-3}]$ we do the homotopy between $\sigma'$ and $\pst_{i+1}(\sigma)$ by sending $(x, t) \in \Delta^{i+1} \times [1 - 2^{-i-2}, 1- 2^{-i-3}]$ to
$$ [\sigma'(x), (\pst_{i+1}(\sigma))(x)]_\sm(2^{i+3}t + 2 - 2^{-i-3}). $$
    \item[$(H5)$] Finally, the homotopy is constant in the time interval $[1 - 2^{-i-3}, 1]$.
\end{enumerate}

The homotopy $h_\sigma$ just described is easily seen to fulfill the Lipschitz condition described in point (2) of Theorem \ref{pseudostraight:thm}, for some function $b_{i+1}$. The distance $d_\infty(\sigma,\, \sp_{i+1}(\sigma))$ is bounded by a function which depends only on $\lip(\sigma)$ and $i+1$. Indeed, it is easily seen that $d_\infty(\sigma,\, \sigma') \le \diam (\sigma) \le L$ 
and $d_\infty(\sigma',\,\sp_{i+1}(\sigma)) \le 1$. This implies in particular that $\sp_{i+1}$ maps locally finite families of simplices to locally finite ones.

Finally, the assignment $S_{i+1}^\sm(\tilde N) \ni \sigma \mapsto h_\sigma \in C^1(\Delta\times I, \tilde N)$ is Borel (with respect to the $C^1$--topologies). First we prove that $\sp_{i+1} \: S_{i+1}^\sm(\tilde N) \to S_{i+1}^\sm(\tilde N)$ is Borel with respect to the $C^1$--topologies. Since the image $\sp_{i+1}(S_{i+1}^\sm(\tilde N))$ is countable, it is sufficient to prove by induction that the map $\sp_{i+1} \: S_{i+1}(\tilde N) \to S_{i+1}(\tilde N)$ is $C^0$--Borel. The map $\sp_{i+1}$ is the composition of the map $\sigma \mapsto \sigma'$ described in points (H1), (H2), (H3) and $\sigma' \mapsto \sp_{i+1}(\sigma)$ given in (H4) and (H5). The second map is $C^0$--Borel by construction (because the $A_{ \overline \sigma}' \subset S_{i+1}(\tilde N)$ are $C^0$--Borel). The first one is $C^0$--Borel because it is continuous on every element of a countable partition of Borel sets. Indeed, fix $ \overline \sigma \in \widehat{S}_i(\widetilde N)$, and consider the set:
$$ C_{\overline \sigma {}_{\big| \partial \Delta}} := \{\sigma \: \sigma' {}_{| \partial \Delta} = \overline \sigma {}_{| \partial \Delta} \} = \{\sigma \: \sp_{i-1}(\partial_k\sigma) = \partial_k\overline \sigma \,\,\,\, \forall \, 0 \,\, \le k \le i\}. $$

This is a $C^0$--Borel set because the boundary operators are Borel and $\sp_{i}$ is Borel by induction. The restriction of the assignment $\sigma \mapsto \sigma'$ over $C_{ \overline \sigma}$ is continuous.

Hence $\sp_{i+1}$ is Borel, and in particular the sets $\sp_{i+1}^{-1}(\overline \sigma) \cap S_{i+1}^\sm(\tilde N)$ are $C^1$--Borel. To prove that the map $\sigma \mapsto h_{\sigma}$ is $C^1$--Borel one simply notes that its restriction on every $\sp_{i+1}^{-1}(\overline \sigma) \cap S_{i+1}^\sm(\tilde N)$ is continuous with respect to the $C^1$--topologies.

TEOREMA COMMENTATO 10 OTTOBRE

TEOREMA COMMENTATO 10 OTTOBRE


\subsection*{Straight chains compute the Lipschitz simplicial volume}
We now give some consequences of Theorem \ref{pseudostraight:thm}. In the following Lemma, by $C_*^\lip(X)$ we mean the subcomplex of $C_*(X)$ whose chains are (finite) sums of Lipschitz simplices.

\begin{lem} \label{da chain top a chain homo pag 11 LS} 
%
Let $X$ be a Riemannian manifold, and let be given Lipschitz homotopies $ \widetilde h_{ \widetilde \sigma}$ for every simplex $ \widetilde \sigma \in S_*^\lip(X)$. Put: $f^{(m)}(\widetilde \sigma) = \widetilde h_{ \widetilde \sigma} {}_{| \Delta^* \times \{m\}}$, $m \in \left\{0, 1\right\}$. Suppose that

\begin{enumerate}
    \item $ \widetilde h_{\partial_k \widetilde \sigma} = \widetilde h_{ \widetilde \sigma} \circ (\partial_k \times \id_I)$ for every Lipschitz simplex $ \widetilde \sigma$.
\end{enumerate}    
Then the induced maps:
$$ f_*^{(m)} \: C_*^\lip(X) \to C_*^\lip(X) \qquad m = 0, \, 1 $$ 
are chain-homotopic. Moreover, suppose that:
\begin{enumerate}    
    \item[(2)] there are functions $b_i \: \R_{\ge 0} \to \R_{\ge 0}$ such that, if $i = \dim \widetilde \sigma$,
    $$ \lip (\widetilde h_{ \widetilde \sigma}) \le b_i(\lip(\widetilde \sigma)); $$
    \item[(3)] if $\{\widetilde \sigma_j\}_{j \in \N}$ is a locally finite family of simplices in $X$, then the family of sets $\{\widetilde h_{\widetilde \sigma_j}(\Delta^* \times I)\}_{j \in \N}$ is locally finite in $X$.
\end{enumerate}
Then we have well defined maps:
$$ f_*^{(m)} \: C_*^{\lf, \lip, (\ell^1)}(X) \to C_*^{\lf, \lip, (\ell^1)}(X) \qquad f^{(m)}(\widetilde \sigma) := \widetilde h_{\widetilde \sigma} \circ i_m \quad m = 0, \, 1 $$ 
that are $C_*^{\lf, \lip, \ell^1}$-chain-homotopic.

Moreover, the obvious smooth version of the statement is also true.
\end{lem}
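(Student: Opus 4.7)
The plan is to carry out the standard prism construction, which turns pointwise homotopies into a chain homotopy via a simplicial subdivision of $\Delta^i \times I$. For each $i$, fix affine maps $G_{k,i} \: \Delta^{i+1} \to \Delta^i \times I$, $k = 0, \ldots, i$, that provide the well-known triangulation of the prism into $(i+1)$ simplices with vertex pattern alternating between the top and bottom faces (see e.g.~\cite[Lemma 2.13]{LS}). Then I define the candidate chain homotopy by
$$ H \: C_i^\lip(X) \to C_{i+1}^\lip(X), \qquad H(\widetilde \sigma) := \sum_{k=0}^i (-1)^k \widetilde h_{\widetilde \sigma} \circ G_{k,i}. $$
A direct computation, which uses compatibility of $G_{k,i}$ with the face maps together with hypothesis (1), yields the standard identity $\partial H + H \partial = f^{(1)}_* - f^{(0)}_*$ on $C_*^\lip(X)$, establishing the first assertion.

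For the second assertion I extend $H$ to $C_*^{\lf, \lip, (\ell^1)}(X)$ by the same formula, applied term-by-term on infinite sums. Hypothesis (2) ensures that every simplex $\widetilde h_{\widetilde \sigma} \circ G_{k,i}$ has Lipschitz constant bounded by $b_i(\lip(\widetilde \sigma)) \cdot \lip(G_{k,i})$, so the resulting simplices are Lipschitz whenever the original ones are. Hypothesis (3) guarantees that if $\{\widetilde \sigma_j\}_j$ is a locally finite family of simplices, then the family $\{\widetilde h_{\widetilde \sigma_j} \circ G_{k,i}\}_{j,k}$ is again locally finite, since each $G_{k,i}$ has compact image in $\Delta^i \times I$ and the image of $\widetilde h_{\widetilde \sigma_j} \circ G_{k,i}$ is contained in $\widetilde h_{\widetilde \sigma_j}(\Delta^i \times I)$. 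Consequently $H(c) \in C_{i+1}^{\lf,\lip}(X)$ whenever $c \in C_i^{\lf,\lip}(X)$. For the $\ell^1$-norm, observe that $\|H(\widetilde \sigma)\|_1 \le i+1$ per simplex, so $\|H(c)\|_1 \le (i+1) \|c\|_1$ and $H$ restricts to $C_*^{\lf,\lip,\ell^1}(X)$. The same identity $\partial H + H \partial = f^{(1)}_* - f^{(0)}_*$ as above remains valid at the locally finite level since it is verified simplex-by-simplex.

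Finally, for the smooth version, if $\widetilde \sigma$ is smooth then, by the assumed smoothness of $\widetilde h_{\widetilde \sigma}$ (as supplied, in our application, by point (4) of Theorem~\ref{pseudostraight:thm}), each composition $\widetilde h_{\widetilde \sigma} \circ G_{k,i}$ is smooth, being the composition of a smooth map with an affine inclusion. Thus the same formula for $H$ defines a chain homotopy on the smooth subcomplexes $C_*^{\sm}(X)$, $C_*^{\lf,\sm}(X)$ and $C_*^{\lf,\sm,\ell^1}(X)$.

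The main technical hurdle is the extension to locally finite $\ell^1$-chains: one must verify simultaneously that $H$ preserves local finiteness, the Lipschitz condition, and the $\ell^1$ bound, and I expect hypothesis (3) about locally finite families of homotopies (rather than merely of simplices) to be exactly what is needed. Everything else is a bookkeeping exercise with the standard prism subdivision.
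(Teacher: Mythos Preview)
Your proposal is correct and follows exactly the paper's approach: the paper also defines the chain homotopy via the prism triangulation $\tilde H(\tilde\sigma)=\sum_{k=0}^i \tilde h_{\tilde\sigma}\circ G_{k,i}$ citing \cite[Lemma 2.13]{LS}, and then invokes hypotheses (2) and (3) to extend to the $C_*^{\lf,\lip,\ell^1}$ setting. If anything, you have supplied more detail than the paper (the explicit $\ell^1$ bound $\|H(c)\|_1\le (i+1)\|c\|_1$ and the verification of the smooth case), whereas the paper dispatches the extension in a single sentence.
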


\begin{proof} From (1) and \cite[Lemma 2.13]{LS} we have, for all $k \le i+1$, affine inclusions $G_{k, i} \: \Delta^{i+1} \to \Delta^i \times I$ such that the map
\begin{equation} \label{3oiqjfwkl} \tilde H \: C_i^\lip(\widetilde N) \to C_{i+1}^\lip(\widetilde N) \qquad \tilde H(\tilde \sigma^{(i)}) := \sum_{k=0}^i \tilde h_{\tilde \sigma} \circ G_{k, i} \end{equation}
provides a $C_*^\lip$--chain homotopy between $f^{(0)}$ and $f^{(1)}$. 
Hence (1) is proven. The hypotheses (2) and (3) and the form of \eqref{3oiqjfwkl} easily allow us to extend this homotopy to a $C_*^{\lf, \lip, \ell^1}$--homotopy between $f^{(0)}$ and $f^{(1)}$.
\end{proof}

\begin{cor} \label{p37yqhwla} The $\Lambda$--equivariant map $\st_* \: C_*^{\sm}(\widetilde N) \to C_*^{\sm}(\widetilde N)$ induces a well defined chain map:
$$ \st_* \: C_*^{\sm}(N) \to C_*^{\sm}(N) $$
chain-homotopic to the identity.
\end{cor}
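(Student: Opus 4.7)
The plan is to descend everything from $\widetilde N$ to $N$ using the $\Lambda$-equivariance guaranteed by Theorem~\ref{pseudostraight:thm}. Every smooth simplex $\sigma \: \Delta^i \to N$ admits a smooth lift $\widetilde \sigma \: \Delta^i \to \widetilde N$ (choose any lift of $\sigma(e_0)$; since $p_N$ is a local diffeomorphism and $\Delta^i$ is simply connected, $\widetilde \sigma$ is smooth). Any two lifts differ by the action of an element of $\Lambda$, so by the $\Lambda$-equivariance of the pseudostraightening, the composition $p_N \circ \st_i(\widetilde \sigma)$ does not depend on the chosen lift. I would therefore define $\st_i(\sigma) := p_N \circ \st_i(\widetilde \sigma)$, which is smooth by part (4) of Theorem~\ref{pseudostraight:thm} and the fact that $p_N$ is a local diffeomorphism. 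Extending linearly gives a well-defined $\Z$-linear map $\st_* \: C_*^\sm(N) \to C_*^\sm(N)$.

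To see that $\st_*$ is a chain map, evaluate the homotopy identity $\widetilde h_{\partial_k \widetilde \sigma} = \widetilde h_{\widetilde \sigma} \circ (\partial_k \times \id_{[0,1]})$ from Theorem~\ref{pseudostraight:thm}(1) at time $t = 1$: this gives $\st_{i-1}(\partial_k \widetilde \sigma) = \partial_k \st_i(\widetilde \sigma)$ in $\widetilde N$, and projecting to $N$ yields $\st_{i-1}(\partial_k \sigma) = \partial_k \st_i(\sigma)$, so $\st_* \partial = \partial \st_*$ on $C_*^\sm(N)$.

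For the chain homotopy, I would use that, for every smooth $\widetilde \sigma$ in $\widetilde N$, the homotopy $\widetilde h_{\widetilde \sigma}$ is smooth (Theorem~\ref{pseudostraight:thm}(4)) and $\Lambda$-equivariant. As before, it descends to a well-defined smooth homotopy $h_\sigma \: \Delta^i \times I \to N$ from $\sigma$ to $\st_i(\sigma)$, and the face-compatibility $h_{\partial_k \sigma} = h_\sigma \circ (\partial_k \times \id_I)$ is inherited from $\widetilde N$. This is exactly the data required to apply the smooth, finite-chain version of Lemma~\ref{da chain top a chain homo pag 11 LS}: the affine decompositions $G_{k,i} \: \Delta^{i+1} \to \Delta^i \times I$ of \cite[Lemma 2.13]{LS} furnish the prism operator
$$ H \: C_i^\sm(N) \to C_{i+1}^\sm(N), \qquad H(\sigma) := \sum_{k=0}^{i} h_\sigma \circ G_{k, i}, $$
which is a chain homotopy between $\id$ and $\st_*$ on $C_*^\sm(N)$.

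The only subtle point, and the main thing to check, is that everything indeed stays within the smooth finite-chain complex rather than merely within $C_*^\lip$; this is ensured on the simplex level by Theorem~\ref{pseudostraight:thm}(4) and on the prism level by the fact that each $G_{k,i}$ is affine, so precomposition with it preserves smoothness. Since we are working with finite chains in $C_*^\sm(N)$, no local finiteness hypothesis needs to be verified for the lemma in this corollary.
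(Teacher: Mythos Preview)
Your proof is correct and follows essentially the same approach as the paper: the corollary is stated there without an explicit proof, as an immediate application of the smooth version of Lemma~\ref{da chain top a chain homo pag 11 LS} (with $X=N$) to the homotopies obtained by projecting the $\Lambda$-equivariant $\widetilde h_{\widetilde\sigma}$ from Theorem~\ref{pseudostraight:thm}. Your write-up simply spells out the descent step (well-definedness via $\Lambda$-equivariance, smoothness via part~(4)) that the paper leaves implicit.
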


If $\{\widetilde \sigma_j\}_{j \in \N}$ is a uniformly Lipschitz family of simplices in $\widetilde N$, then $\{\st(\sigma_j)\}_{j \in \N}$ is locally finite and uniformly Lipschitz. Moreover, if $\{\widetilde \sigma_j\}_{j \in \N}$ is locally finite, then the family of homotopies $\{\widetilde h_{\widetilde \sigma_j}\}_{j \in \N}$ is locally finite too. Observe that a $\Lambda$--equivariant locally finite family of uniformly Lipschitz homotopies in $\tilde N$ projects onto a locally finite family of uniformly Lipschitz homotopies. Therefore we can apply the second part of Lemma \ref{da chain top a chain homo pag 11 LS}, and obtain:


\begin{cor} \label{96t347yoehiluwdjs} The $\Lambda$--invariant map: $\st_* \: C_*^{\lf, \ell^1, \lip}(\widetilde N) \to C_*^{\lf, \ell^1, \lip}(\widetilde N)$ induced by $\st$ is $C_*^{\lf, \ell^1, \lip}$--chain-homotopic to the identity. By the $\Lambda$--equivariance of the homotopies $\tilde h_{\tilde \sigma}$, the induced map: $\st_* \: C_*^{\lf, \ell^1, \lip}(N) \to C_*^{\lf, \ell^1, \lip}(N)$ is also chain-homotopic to the identity.
\end{cor}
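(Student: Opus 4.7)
The plan is to recognize Corollary~\ref{96t347yoehiluwdjs} as an immediate application of the locally finite version of Lemma~\ref{da chain top a chain homo pag 11 LS} to the pseudostraightening data produced by Theorem~\ref{pseudostraight:thm}. Concretely, I will verify that the $\Lambda$-equivariant homotopies $\tilde h_{\tilde \sigma}$ constructed in Section~\ref{sezione 3} fulfill the three hypotheses of that lemma, deduce the chain-homotopy statement upstairs in $\tilde N$, and then descend it to $N$ via $\Lambda$-equivariance.

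Hypothesis~(1) of Lemma~\ref{da chain top a chain homo pag 11 LS} is exactly condition~(1) of Theorem~\ref{pseudostraight:thm}, and hypothesis~(2) is condition~(2) of the same theorem, with the family of functions $b_i$ provided there. The only thing that still needs justification is hypothesis~(3), namely that a locally finite family of simplices $\{\tilde \sigma_j\}_{j\in\N}$ gives rise to a locally finite family of ``prisms'' $\{\tilde h_{\tilde \sigma_j}(\Delta^* \times I)\}_{j\in\N}$.

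For this step I would argue as follows. Any chain in $C_*^{\lf, \ell^1, \lip}(\tilde N)$ has, by definition, a finite global Lipschitz bound $L$, so it suffices to treat locally finite families that are uniformly $L$-Lipschitz. As noted in the discussion following the inductive step, the uniform bound $d_\infty(\tilde \sigma, \st_i(\tilde \sigma)) \le L + 1$, combined with the explicit construction in (H1)--(H5), forces $\tilde h_{\tilde \sigma}(\Delta^i \times I)$ to be contained in a uniform neighborhood of $\tilde \sigma(\Delta^i)$ whose radius depends only on $L$ and $i$. Since each $\tilde \sigma_j(\Delta^i)$ is a compact set of uniformly bounded diameter (at most $L$), a uniform thickening of a locally finite family of such compacta is again locally finite, which yields~(3).

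With all three hypotheses in place, the second part of Lemma~\ref{da chain top a chain homo pag 11 LS} supplies the desired chain homotopy on $C_*^{\lf, \ell^1, \lip}(\tilde N)$; it is $\Lambda$-equivariant because it is assembled from the $\Lambda$-equivariant homotopies $\tilde h_{\tilde \sigma}$ via the explicit formula~\eqref{3oiqjfwkl}. Taking the quotient by $\Lambda$ then transports both $\st_*$ and its chain homotopy to $C_*^{\lf, \ell^1, \lip}(N)$, which is precisely what the corollary asserts. The main delicate point is the verification of~(3): it is where one genuinely uses the uniform control of $d_\infty(\tilde \sigma, \st_i(\tilde \sigma))$ in terms of the global Lipschitz constant, a bound that would fail without the coherence built into the pseudostraightening.
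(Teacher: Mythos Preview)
Your proposal is correct and follows essentially the same route as the paper: the corollary is deduced immediately from the second part of Lemma~\ref{da chain top a chain homo pag 11 LS} once conditions (1)--(3) are checked, with (1) and (2) coming verbatim from Theorem~\ref{pseudostraight:thm} and (3) from the uniform $d_\infty$-bound on the homotopies, after which $\Lambda$-equivariance is used to descend to $N$. Your justification of~(3) via uniform thickenings is slightly more explicit than the paper's one-line remark, and you could streamline it further by noting directly that $\lip(\tilde h_{\tilde\sigma})\le b_i(L)$ already forces $\tilde h_{\tilde\sigma}(\Delta^i\times I)$ into the closed $b_i(L)$-neighborhood of $\tilde\sigma(\Delta^i)$, which suffices since $\widetilde N$ is complete (hence proper).
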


In particular, since $\st_* (C_*^{\lf, \ell^1, \lip}(N)) \subseteq C_*^{\lf, \ell^1, \lip, \sm}(N)$ and $\st_*$ is norm non-increasing, we immediately get: 
\begin{cor} \label{3o7hfuweidj} The Lipschitz simplicial volume can be computed by smooth cycles: 
$$ \|N\|_\lip = \inf \{\|c_N\|_1 \: c_N \in C_n^{\lf, \lip, \ell^1, \sm}(N) \textrm{ is a (smooth) fundamental cycle} \} $$
where we put $\inf \emptyset = +\infty$.
\end{cor}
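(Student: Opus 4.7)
The plan is to deduce the corollary directly from Corollary \ref{96t347yoehiluwdjs} and the smoothness property (4) of Theorem \ref{pseudostraight:thm}, by verifying that the pseudostraightening operator $\st_*$ sends any Lipschitz locally finite $\ell^1$ fundamental cycle to a smooth one without increasing its $\ell^1$--norm.

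The inequality $\|N\|_\lip \le \inf\{\|c_N\|_1 : c_N \in C_n^{\lf,\lip,\ell^1,\sm}(N)\text{ is a fundamental cycle}\}$ is immediate, since every smooth fundamental cycle is in particular a Lipschitz one, so the infimum on the right is taken over a (possibly) smaller set. For the reverse inequality, I would take an arbitrary Lipschitz fundamental cycle $c \in C_n^{\lf,\ell^1,\lip}(N)$ with $\|c\|_1$ close to $\|N\|_\lip$, and consider $\st_*(c)$. First, $\st_*(c)$ lands in $C_n^{\sm}(N)$ by the defining property of $\st_*$ in Theorem \ref{pseudostraight:thm}; it lies in $C_n^{\lf,\lip}(N)$ because $c$ has bounded Lipschitz constant $L$, hence $\supp(\st_*(c)) \subseteq \st_n(S_n^L(\widetilde N))$ projects to a locally finite uniformly Lipschitz family of simplices in $N$ by points (2) and (3) of Theorem \ref{pseudostraight:thm}; and it lies in $C_n^{\ell^1}(N)$ because the operator $\st_*$ is norm non-increasing (the images of distinct simplices can only collapse, and distinct coefficients then add with possible cancellation).

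Next, one checks that $\st_*(c)$ is still a fundamental cycle. Since $\st_*$ is a chain map, $\partial \st_*(c) = \st_*(\partial c) = 0$. Moreover, by Corollary \ref{96t347yoehiluwdjs} the operator $\st_* : C_*^{\lf,\ell^1,\lip}(N) \to C_*^{\lf,\ell^1,\lip}(N)$ is chain-homotopic to the identity, so $[\st_*(c)] = [c] = [N]_\lip$ in $H_n^{\lf,\lip}(N)$. Hence $\st_*(c) \in C_n^{\lf,\ell^1,\lip,\sm}(N)$ is a (smooth) fundamental cycle, and the norm estimate $\|\st_*(c)\|_1 \le \|c\|_1$ shows that
\[ \inf\{\|c_N\|_1 : c_N \in C_n^{\lf,\lip,\ell^1,\sm}(N)\text{ is a fundamental cycle}\} \le \|c\|_1. \]
Letting $\|c\|_1 \to \|N\|_\lip$ yields the desired inequality.

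The only point that requires minor care is that $\st_*$ preserves not only the Lipschitz and local-finiteness conditions individually but also the $\ell^1$ summability, which, as noted, follows automatically from the fact that replacing each simplex by another simplex (then aggregating repeated simplices with their coefficients) cannot increase $\ell^1$--norm. There is no real obstacle: all genuine difficulty has already been absorbed into the construction of $\st_*$ and the chain-homotopy statement of Corollary \ref{96t347yoehiluwdjs}. The corollary is essentially a formal consequence of these results together with the smoothness clause in Theorem \ref{pseudostraight:thm}.
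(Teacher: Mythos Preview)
Your proposal is correct and follows exactly the same route as the paper, which derives the corollary in one sentence from the inclusion $\st_*(C_*^{\lf,\ell^1,\lip}(N)) \subseteq C_*^{\lf,\ell^1,\lip,\sm}(N)$, the norm non-increasing property of $\st_*$, and Corollary~\ref{96t347yoehiluwdjs}. You have simply unpacked these ingredients in more detail.
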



\section{\label{sezione 4} Proof of the Proportionality Principle for non-compact manifolds}

We are now ready to prove our main result. Our argument retraces the proof in \cite{LS} almost verbatim. In order to get rid of the assumptions about curvature of \cite{LS}, we need to replace the straightening operator defined in \cite{LS} with our pseudostraightening procedure, described in Theorem \ref{pseudostraight:thm}. 


We recall some relevant definitions from \cite[p. 50]{L} and \cite[p. 24]{LS}. Given a signed measure set $(Z, \mu)$, we say that a subset $Z' \subseteq Z$ is a \textbf{determination set}
 for $\mu$ if $\mu(V) = 0$ for every measurable set $V \subseteq Z \setminus Z'$. Let $X$ be a Riemannian manifold, and let $\mu$ be a signed measure of finite total variation on the set $Z := S_i^\sm(X)$, which is Borel with respect to the $C^1$--topology of $S_i^\sm(X)$. 
We say that $\mu$ has \textbf{Lipschitz determination} if $S_i^{L, \sm}(X)$ is a determination set of $\mu$ for some $L \ge 0$.
We denote by $ \mathscr C_*^\lip(X)$ the complex of \textbf{Lipschitz measure chains}, i.e. the complex of signed Borel measures on $S_*(X)$ of Lipschitz determination and finite variation, 
with the natural boundary operator. We denote by $ \mathscr H_*^\lip(X)$ the associated \textbf{Lipschitz measure homology}.

\subsection*{Smearing}
Let $M$ and $N$ be $n$--dimensional manifolds as in Theorem \ref{main theorem}. Let $\pi_1(N) = \Lambda$, $G := \isom^+ \,\, \widetilde N$ and $\pi_1(M) = \Gamma$. 
Under the assumption that $M$ and $N$ have isometric Riemannian universal covers, we have a \emph{smearing} chain map:
\begin{equation} \label{efhiuwljn} \smear_* \: C_*^{\lf, \ell^1, \lip, \sm}(M) \to \mathscr C_*^\lip(N) \end{equation}
which is defined as follows. Fix on $G$ the compact-open topology. It is well-known that $G$ is a locally compact topological group (in fact, it is a Lie group), so it admits a left-invariant Haar measure $|\cdot|_G$. It is proven in \cite{LS} that $\Lambda$ is a lattice in $G$, hence $G$ is unimodular, so $|\cdot|_G$ is also right-invariant. We denote by $|\cdot|_{\Lambda \backslash G}$ the right-invariant measure induced by $|\cdot|_G$ on $\Lambda \backslash G$, normalized in such a way that $|\Lambda \backslash G|_{\Lambda \backslash G}=1$.

We normalize $|\cdot|_{\Lambda \backslash G}$ to 1. By the Haar uniqueness Theorem, $|\cdot|_{\Lambda \backslash G}$ is uniquely defined. For a simplex $\sigma \in S_*^{\sm}(M)$ which lifts to some $ \widetilde \sigma \in S_*^\sm(\tilde N)$ the element $\smear(\sigma)$ is defined to be the pushforward of $|\cdot|_{\Lambda \backslash G}$ through the map:
$$ \Lambda \backslash G \to S_*^\sm(N) \qquad  [g] \mapsto p_N \circ g \circ \widetilde \sigma. $$
We extend $\smear$ over uniformly Lipschitz $\ell^1$--sums by linearity. The map $\smear$ is chain and well defined by the right invariance of the Haar measure. 

Now, consider the diagram below:

\begin{equation} \label{fehiruj}
\begin{diagram} 
&&   \mathscr  C_*^{\Lip}(N)\\
& \ruTo^{\smear_*} & \phantom{\,\, j_*} \uTo \,\, j_* \\ 
C_*^{\lf, \ell^1, \Lip, \sm}(M) & \xrightarrow{\phantom{spa} \varphi_*\phantom{zio} } & C_*^{\lf, \ell^1, \Lip, \sm}(N) \\ 
\end{diagram}
\end{equation}
Here, $j$ is the natural inclusion, defined by mapping a simplex $\sigma$ in $N$ to the Dirac measure $\delta_\sigma$ centered in $\sigma$. We want to define a \emph{discrete smearing} $\varphi_* \: C_*^{\lf, \ell^1, \lip, \sm}(M) \to C_*^{\lf, \ell^1, \lip, \sm}(N)$, i.e. a norm non-increasing chain map which makes diagram \eqref{fehiruj} commute up to homotopy.

\begin{ntz} \label{} In this section, we will attain ourselves to the following notation:
$$ \widetilde h_{ \widetilde \sigma}(g) := \widetilde h_{g \widetilde \sigma} \qquad h_{ \widetilde \sigma}(g) := p_N \circ \widetilde h_{g \widetilde \sigma} \qquad f_{\widetilde \sigma}(g) := p_N \circ \st( g \widetilde \sigma). $$
\end{ntz}

The map $G \to S_*^\sm(\widetilde N)$, $g \mapsto g \widetilde \sigma$ (where $G$ is equipped with the compact-open topology and $S_*^\sm(\widetilde N)$ with the $C^1$--topology) is continuous, hence Borel (see \cite[Section 4.3.2]{LS}). Therefore, by point (5) of Theorem \ref{pseudostraight:thm}, the maps $g \mapsto \widetilde h_{ g \widetilde \sigma}$ and $g \mapsto h_{ g \widetilde \sigma}$ are Borel too. In particular, $f_{\tilde \sigma}$ is Borel. Taking the quotient by $\Lambda$ we get a Borel map which we still denote by $f_{\widetilde \sigma}$:
$$ f_{\widetilde \sigma} \: \Lambda \backslash G \to S_i^\sm(N), \qquad  [g] \mapsto p_N \circ \st_i(g \widetilde \sigma). $$

We define the \textbf{discrete smearing $\varphi$} as:
\begin{equation} \label{definizione di varphi} \varphi_i \: C_i^{\lf, \ell^1, \lip, \sm}(M) \to C_i^{\lf, \ell^1, \lip, \sm}(N) \qquad \sum_{j \in \N} \lambda_j \sigma_j \mapsto \sum_{j \in \N} \lambda_j \sum_{\varrho \in \widehat{S_i}(N)} |f_{\widetilde \sigma_j}^{-1}(\varrho)|_{\Lambda \backslash G} \varrho, \end{equation}
where $ \widetilde \sigma_j \in S_i^\sm(\widetilde N)$ lifts $\sigma_j$.

We now show that $\varphi_*$ is well defined -- i.e. that \eqref{definizione di varphi} is independent of the choice of the lifts $\widetilde \sigma_j$ of $\sigma_j$ in $M$ -- and that the chain on the right side of \eqref{definizione di varphi} is Lipschitz and locally finite. 
The first assertion is a straightforward consequence of the right invariance of the Haar measure on $\Lambda \backslash G$. 

Now, let $L \in \R$ be such that $\lip (\sigma_j) = \lip ( \widetilde \sigma_j) \le L$ for all $j$. By point (3) of Theorem \ref{pseudostraight:thm}:
$$ \Lip(f_{\widetilde \sigma_j}(g)) = \lip (\st (g \widetilde \sigma_j)) \le b_i(\lip (g \widetilde \sigma_j)) = b_i(\lip (\widetilde \sigma_j)) \le b_i(L), $$ 
and this readily implies that the chain $\varphi_i(\sum_j \lambda_j \sigma_j)$ is Lipschitz.

The local finiteness is implied by point (3) of Theorem \ref{pseudostraight:thm}. The verification that $\varphi_*$ is chain is identical to the one given in \cite[Section 4.3.2]{LS}.

We now prove that \eqref{fehiruj} commutes up to homotopy. We need a technical Lemma that allow us to transfer $C_*^\lip$--homotopies to homotopies in measure homology. The proof of the following lemma is identical to the one given in \cite[Section 2.6]{Zastrow}, and we simply note that it works with much weaker hypotheses. 


\begin{lem} [\cite{Zastrow}] \label{Generale Zastrow} Let $(X, \Sigma_X)$ and $(Y, \Sigma_Y)$ be measurable spaces, and let 
$$ \{f_i\}_{i  = 1}^{k} \: X \to Y $$ 
be a finite number of measurable maps between them. Let $ \mathscr C(X)$ be the vector space of signed $\Sigma_X$--measures on $X$ with bounded total variation. The same with $Y$. Let $\lambda_i \in \R$ be such that:
$$ \sum_{i=1}^k \lambda_i f_i(x) = 0 \qquad \forall x \in X $$
(this is a formal linear combination of elements of $Y$). Then, if $ \mathfrak f_i \: \mathscr C(X) \to \mathscr C(Y)$ denotes the pushforward of $f_i$, we have:
$$ \sum_i \lambda_i\,\, \mathfrak f_i(\mu) = 0 \qquad \forall \mu \in \mathscr C(X). $$
\end{lem}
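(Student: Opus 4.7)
The plan is to reduce to a pointwise computation by partitioning $X$ according to the coincidence pattern of the values $f_1(x), \ldots, f_k(x)$. For every equivalence relation $R$ on $\{1, \ldots, k\}$ set
$$X_R := \{x \in X \: f_i(x) = f_j(x) \iff iRj \text{ for all } i, j\}.$$
The finitely many $X_R$ partition $X$, and each is measurable provided the diagonal $\Delta_Y \subseteq Y \times Y$ is measurable (as holds in the intended application, where $Y = S_i^\sm(N)$ carries a separable Borel structure coming from its $C^1$--topology): indeed $X_R$ is a finite boolean combination of the preimages $(f_i, f_j)^{-1}(\Delta_Y)$ and their complements.

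On every non-empty $X_R$ and every $R$-equivalence class $C$, the maps $\{f_i\}_{i \in C}$ agree; denote their common restriction by $g^R_C \: X_R \to Y$. Evaluating the hypothesis $\sum_i \lambda_i f_i(x) = 0$ at $x \in X_R$ and regrouping yields the formal identity
$$\sum_{C \in \{1,\ldots,k\}/R} \Bigl(\sum_{i \in C} \lambda_i\Bigr) g^R_C(x) = 0$$
in which the points $g^R_C(x) \in Y$ are distinct for distinct classes $C$. Since distinct basis elements of the free real vector space on $Y$ are linearly independent, this forces $\sum_{i \in C} \lambda_i = 0$ for every such class $C$.

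Given these cancellations, for any measurable $B \subseteq Y$ I would finish by computing
\begin{align*}
\sum_{i=1}^k \lambda_i\, \mathfrak{f}_i(\mu)(B)
&= \sum_{i=1}^k \lambda_i\, \mu(f_i^{-1}(B)) = \sum_R \sum_{i=1}^k \lambda_i\, \mu\bigl(f_i^{-1}(B) \cap X_R\bigr) \\
&= \sum_R \sum_{C} \Bigl(\sum_{i \in C} \lambda_i\Bigr) \mu\bigl((g^R_C)^{-1}(B)\bigr) = 0,
\end{align*}
using that $f_i^{-1}(B) \cap X_R = (g^R_C)^{-1}(B)$ whenever $i$ lies in the class $C$, and that for every $X_R \neq \emptyset$ the inner sum vanishes by the previous paragraph.

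The only genuine obstacle is the measurability of the sets $X_R$, which reduces to the measurability of $\Delta_Y$ in $Y \times Y$; this is automatic for the Borel structure on $S_i^\sm(N)$ used in the paper. It is precisely the ability to bypass stronger topological hypotheses at this step (e.g.\ $Y$ Hausdorff, as in Zastrow's original formulation) that is meant by the author's remark that the proof works ``with much weaker hypotheses''.
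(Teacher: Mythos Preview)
Your core strategy---partition $X$ by the coincidence pattern of $(f_1(x),\dots,f_k(x))$, observe that on each piece the coefficients in each equivalence class must sum to zero, then split the pushforward computation over the partition---is exactly the argument the paper has in mind (following Zastrow). The difference lies in how the measurability of the partition pieces $X_R$ is handled.

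You need $X_R \in \Sigma_X$ in order to write $\mu(f_i^{-1}(B)) = \sum_R \mu(f_i^{-1}(B)\cap X_R)$, and you obtain this by assuming the diagonal $\Delta_Y$ is measurable in $Y\times Y$. That is an extra hypothesis not present in the lemma as stated, so your argument does not prove the lemma in the generality asserted. The paper's (Zastrow's) argument avoids this hypothesis altogether: since there are only finitely many sets $X_R$, one invokes the elementary fact (see Halmos, \emph{Measure Theory}, III \S16) that a signed measure of finite total variation on $\Sigma_X$ extends to the $\sigma$-algebra generated by $\Sigma_X$ together with any finite family of subsets. One then runs your computation with the extended measure; at the end only the values $\mu(f_i^{-1}(B))$ appear, which lie in the original $\sigma$-algebra, so the conclusion descends back to $\mu$ itself.

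This also means your reading of the ``weaker hypotheses'' remark is inverted: the point is not that one replaces a topological assumption on $Y$ by the weaker assumption that $\Delta_Y$ be measurable, but that via the extension trick one needs \emph{no} assumption on $\Delta_Y$ at all---the lemma holds for arbitrary measurable spaces $(Y,\Sigma_Y)$.
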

$\\\\$

Since $h_{\widetilde \sigma} \: G \to C^1(\Delta^i \times I, N)$ is Borel, the following map, which we denote with the same name, is Borel too:
$$ h_{\widetilde \sigma} \: \Lambda \backslash G \to C^1(\Delta^i \times I, N). $$

For every simplex $\widetilde \sigma \in C_*^\sm(\widetilde N)$ and $g \in G$, the element $\widetilde h_{\widetilde \sigma}(g)$ is a smooth homotopy between $\st (g \widetilde \sigma)$ and $g \widetilde \sigma$. By point (1) of Theorem \ref{pseudostraight:thm}
and Lemma \ref{da chain top a chain homo pag 11 LS}, the map:
\begin{equation} \label{28037ajos} \widetilde H \: C_i^\sm(\widetilde N) \to C_{i+1}^\sm(\widetilde N) \qquad \widetilde H_{\widetilde \sigma^{(i)}}(g) = \sum_{k=0}^i \widetilde h_{\widetilde \sigma}(g) \circ G_{k, i} \end{equation}
defines a $\Lambda$--equivariant chain homotopy between 
$\widetilde \sigma \mapsto \st_i (g \widetilde \sigma)$ and $\widetilde \sigma \mapsto g \widetilde \sigma$ in $C_i^\sm(\widetilde N)$
. 
Let $H := p_N \circ \widetilde H$. Composing \eqref{28037ajos} with $p_N$, we get a chain homotopy:
$$ H \: C_i^\sm(\widetilde N) \to C_{i+1}^\sm(N) \qquad \widetilde \sigma \mapsto H_{\widetilde \sigma}(g) = \sum_{k=0}^i h_{\widetilde \sigma}(g) \circ G_{k, i} $$
between $\widetilde \sigma \mapsto f_{\widetilde \sigma}(g) = p_N \circ \st(g\widetilde \sigma)$ and $ \widetilde \sigma \mapsto p_N \circ g \widetilde \sigma$ in $C_*^\sm(\widetilde N)$. More explicitely, and using the $\Lambda$--invariance of the maps involved, we have:
\begin{equation} \label{3ofqhiuewl} f_{ \widetilde \sigma}(\Lambda g) - p_N \circ \Lambda g \widetilde \sigma = 
\partial_{i+1} \left( \sum_k h_{ \widetilde \sigma}(\Lambda g) \circ G_{k, i} \right) + \sum_{k=0}^{i-1} \sum_{j=0}^i  h_{\partial_{j} \widetilde \sigma}(\Lambda g) \circ G_{k, i-1} \qquad \forall \widetilde \sigma \in S_i^\sm(\widetilde N). \end{equation}
Let $ \widetilde \sigma \in S_i^\sm(\tilde N)$ be the lift of some $\sigma \in S_i^\sm(M)$. By $\nu_{\sigma, k,i}$ we denote the pushforward of $|\cdot|_{\Lambda \backslash G}$ through the map:
$$ \Lambda \backslash G \to S_{i+1}^\sm(N) \qquad \Lambda g \mapsto h_{\widetilde \sigma}(g) \circ G_{k, i}. $$
By the right invariance of the Haar measure on $\Lambda \backslash G$, it is easy to see that $\nu_{\sigma, k, i}$ is well defined, i.e. it is independent of $\tilde \sigma$.

By Lemma \ref{Generale Zastrow}, with $X = \Lambda \backslash G$ and $Y = S_i^\sm(N)$, we are authorized to pushforward $|\cdot|_{\Lambda \backslash G}$ through every map in \eqref{3ofqhiuewl} and sum:
\begin{equation} \label{oy3efhilqnwj} j_i(\varphi_i(\sigma)) - \smear_i(\sigma) = \partial \sum_k \nu_{\sigma, k, i} + \sum_j \sum_k \nu_{ \partial_j \sigma, k, i-1}, \end{equation}

It follows from \eqref{oy3efhilqnwj} that the map:

$$ \mathscr H_* \: C_*^{\lf, \ell^1, \lip, \sm}(M) \to \mathscr C_{*+1}^{\lip}(N) \qquad \mathscr H_i \left(\sum_{j \in \N} \lambda_j \sigma_{j} \right) := \sum_{j \in \N} \lambda_j \sum_{k=0}^i \nu_{\sigma_j, k, i} $$
provides a chain homotopy between $j_* \circ \varphi_*$ and $\smear_*$. Indeed, the measure on the right has finite total variation, since it is an $\ell^1$--sum of measures whose total variation is 1. Moreover, it obviously has Lipschitz determination set
.




\subsection*{The conclusion of the proof}
In order to conclude, we now exploit some results from \cite{LS}. 
By pairing with the volume form, the following two functions are introduced in \cite{LS}:
$$ \left\langle \dvol ; \,\, \right\rangle \: \, C_n^{\lf, \ell^1, \lip, \sm}(N) \to \R \qquad \left\langle \dvol; \sigma \right\rangle := \int_{\Delta_n} \sigma^* \dvol, $$
$$ \left\langle \dvol ; \,\, \right\rangle \: \mathscr C_n^{\lip}(N) \to \R \qquad \left\langle \dvol; \mu \right\rangle := \int_{C^1(\Delta, N)} \left\langle \dvol, \sigma \right\rangle d\mu(\sigma), $$ 
where $\sigma \in C^1(\Delta_k, N)$, and $\mu \in \mathscr C_n^{\sm}(N)$. Note that the function ~$C^1(\Delta, N) \ni \sigma \mapsto \left\langle \dvol, \sigma \right\rangle$ is continuous, hence in particular $C^1(\Delta, N)$--Borel. However, it is probably not even Borel with respect to the $d_\infty$--topology, and this is the reason why we needed to concentrate our attention on spaces of smooth simplices endowed with the $C^1$--topology (see also \cite[p. 108]{L}). By an application of Stokes Theorem, we have well defined functions $H_n^{\lf, \lip, \ell^1, \sm}(N) \to \R$ and $ \mathscr H_n^\lip(N) \to \R$.

Those functions satisfy the following properties: if $\|M\|_\lip < \infty$, then \begin{enumerate}
    \item $ \left\langle \smear_n (c_M), \dvol_N \right\rangle = \vol(M) $, where $\dvol_N$ is the volume form on $N$ and $c_M$ is a fundamental cycle of $M$ in $C_n^{\lf, \ell^1, \lip, \sm}(M)$.
    \item $\left\langle c, \dvol_N \right\rangle = \left\langle j_n(c), \dvol_N \right\rangle$, for every cycle $c \in C_*^{\lf, \ell^1, \lip, \sm}(N)$. Here, $j \: C_n^{\lf, \ell^1, \lip, \sm}(M) \to \mathscr C^{\lip, \sm}(M)$ is the natural inclusion.
    \item $ \left\langle c_N, \dvol_N \right\rangle = \vol(N)$, if (and only if) $c_N \in C_*^{\lf, \ell^1, \lip, \sm}(N)$ is a fundamental cycle of $N$.
\end{enumerate}

Point (3) and (1) are proven in \cite[Proposition 4.4, Proposition 4.10]{LS}, while (2) is easily seen to be true. 

We now complete the proof of Theorem \ref{main theorem}. By symmetry, it is sufficient to prove that
$$ \|M\|_\lip \ge \frac{\vol M}{ \vol N} \|N\|_\lip; $$
hence we can assume that $\|M\|_\lip < \infty$. By Corollary \ref{3o7hfuweidj}, for every $ \varepsilon>0$ there exists a fundamental cycle $c_M \in C_n^{\lf, \lip, \ell^1, \sm}(M)$ of $M$ such that $\|c_M\| \le \|M\|_\lip + \varepsilon$. From (3) and
$$ \left\langle \dvol_N,\, \varphi_n(c_M) \right\rangle = \left\langle \dvol_N,\, j_n\circ \varphi_n(c_M) \right\rangle = \left\langle \dvol_N,\, \smear_n(c_M) \right\rangle = \vol N $$
it follows that $\varphi_n(c_M)$ is the $ \frac{\vol M}{ \vol N}$ multiple of a fundamental cycle of $N$. But the map $\varphi_*$ is norm non-increasing, hence 
$$  \frac{\vol M}{ \vol N} \|N\|_\lip \le \|\varphi_n(c_M)\| \le \|c_M\| \le \|M\|_\lip + \varepsilon,  $$
whence the conclusion.

\section{\label{sezione 5} Supermultiplicativity of the Lipschitz simplicial volume of products}



In \cite{LS}, by using the straightening procedure, Theorem \ref{productEstimates} is proven for non-positively curved manifolds. From the proof of L\"oh and Sauer, it is apparent that the only place where the hypotheses about curvature are exploited is the proof of \cite[Proposition 3.20]{LS}. Hence it is sufficient to prove Proposition \ref{3o87rdhiuwljk}, which is the analogous of \cite[Proposition 3.20]{LS}, without the curvature condition.

\begin{dfn}[\cite{LS}] \label{ } Let $M$ and $N$ be Riemannian manifolds, and let $\pi_M \: M \times N \to M$ and $\pi_N \: M \times N \to N$ be the projections. A \textbf{sparse} chain on $M \times N$ is a chain $c = \sum_{k \in \N} \lambda_k \,\, \sigma_k \in C_*^{\lf}(M \times N)$ such that $\sum_{k \in \N} \lambda_k \,\, \pi_M \circ \sigma_k \in C_*^{\lf}(M)$ and $\sum_{k \in \N} \lambda_k \,\, \pi_N \circ \sigma_k \in C_*^{\lf}(N)$.
\end{dfn}

\begin{prop} \label{3o87rdhiuwljk} Let $M$ and $N$ be complete Riemannian manifolds. Then, for every $c \in C_*^{\lf, \lip}(M \times N)$, there is a \emph{sparse} cycle $c' \in C_*^{\lf, \lip}(M \times N)$ homologous to $c$ with $\|c'\| \le \|c\|$.
\end{prop}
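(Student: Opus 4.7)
The plan is to follow L\"oh and Sauer's argument for \cite[Proposition 3.20]{LS} verbatim, replacing their geodesic straightening (which requires non-positive curvature) with the pseudostraightening of Theorem~\ref{pseudostraight:thm} applied factorwise. Set $\Gamma := \pi_1(M)$, $\Lambda := \pi_1(N)$, so that $p_M \times p_N \: \widetilde M \times \widetilde N \to M \times N$ is the universal cover with deck group $\Gamma \times \Lambda$, and let $\st^M, \widetilde h^M$ and $\st^N, \widetilde h^N$ be the pseudostraightenings on $\widetilde M$ and $\widetilde N$ provided by Theorem~\ref{pseudostraight:thm}. For a Lipschitz simplex $\widetilde \sigma \: \Delta^i \to \widetilde M \times \widetilde N$, write $\widetilde \sigma_M := \pi_{\widetilde M} \circ \widetilde \sigma$ and $\widetilde \sigma_N := \pi_{\widetilde N} \circ \widetilde \sigma$ (both Lipschitz, since the projections are $1$-Lipschitz), and define a \emph{product pseudostraightening}
\[ \st^\times(\widetilde \sigma) := \bigl(\st^M(\widetilde \sigma_M),\, \st^N(\widetilde \sigma_N)\bigr), \qquad \widetilde h^\times_{\widetilde \sigma} := \bigl(\widetilde h^M_{\widetilde \sigma_M},\, \widetilde h^N_{\widetilde \sigma_N}\bigr). \]

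I would then check that $\st^\times$ and $\widetilde h^\times$ satisfy all five properties of Theorem~\ref{pseudostraight:thm} for the Riemannian manifold $\widetilde M \times \widetilde N$ with deck group $\Gamma \times \Lambda$. Face commutativity, equivariance, smoothness preservation and Borel measurability are inherited componentwise from $\st^M, \st^N$; the Lipschitz bound for $\widetilde h^\times$ holds with $b_i^\times(L) \le \sqrt{b_i^M(L)^2 + b_i^N(L)^2}$ under the Euclidean product metric. The local finiteness of $\st^\times(S_i^\lip(\widetilde M \times \widetilde N)) \cap S_i^L(\widetilde M \times \widetilde N)$ follows from property (3) applied to each factor: any compact subset of $\widetilde M \times \widetilde N$ sits inside some product $K_1 \times K_2$, and any product-pseudostraight simplex $(\tau_M, \tau_N)$ of Lipschitz constant at most $L$ meeting it must have $\tau_M$ among the finitely many elements of $\widehat S_i^L(\widetilde M)$ meeting $K_1$ and $\tau_N$ among the finitely many elements of $\widehat S_i^L(\widetilde N)$ meeting $K_2$. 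By $(\Gamma \times \Lambda)$-equivariance, $\st^\times$ descends to $M \times N$, and Lemma~\ref{da chain top a chain homo pag 11 LS} then provides a chain homotopy between $\st^\times_*$ and the identity on $C_*^{\lf, \lip}(M \times N)$.

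Set $c' := \st^\times_*(c)$. This is a cycle in $C_*^{\lf, \lip}(M \times N)$ homologous to $c$, and $\|c'\|_1 \le \|c\|_1$ since $\st^\times$ sends each simplex to a single simplex. To verify sparsity, write $c = \sum_k \lambda_k \sigma_k$ with $L := \lip(c) < \infty$ and lift each $\sigma_k$ to some $\widetilde \sigma_k$; then
\[ \pi_M \circ c' = \sum_k \lambda_k\, p_M \circ \st^M(\widetilde \sigma_{M, k}), \]
whose summands are pseudostraight simplices in $M$ of Lipschitz constant at most $b_i^M(L)$. The main technical point is showing that this family is locally finite in $M$, which requires transferring property (3) of Theorem~\ref{pseudostraight:thm} from the universal cover to its $\Gamma$-quotient: given a compact $K \subseteq M$ and a compact $K' \subseteq \widetilde M$ with $p_M(K') = K$, any summand $p_M \circ \st^M(\widetilde \sigma_{M,k})$ meeting $K$ has some $\Gamma$-translate of its $\widetilde M$-representative meeting $K'$, and property (3) on $\widetilde M$ bounds the number of elements of $\widehat S_i^{b_i^M(L)}(\widetilde M)$ meeting $K'$ by a finite number. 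Hence $\pi_M \circ c' \in C_*^{\lf}(M)$; the symmetric argument yields $\pi_N \circ c' \in C_*^{\lf}(N)$, establishing sparsity.
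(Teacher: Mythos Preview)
Your proposal is correct and follows essentially the same approach as the paper: both define a product pseudostraightening $(\st_M,\st_N)$ from the factorwise pseudostraightenings of Theorem~\ref{pseudostraight:thm}, verify that the product homotopies satisfy the hypotheses of Lemma~\ref{da chain top a chain homo pag 11 LS}, and conclude that $c' := (\st_M,\st_N)_*(c)$ is a sparse cycle homologous to $c$ with $\|c'\|_1 \le \|c\|_1$. Your write-up is simply more explicit than the paper's (which leaves the verification of conditions (1)--(3) and of sparsity to the reader).
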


\begin{proof} Let $p_N \: \widetilde N \to N$ be the Riemannian universal covering of $N$, and let 
$ {\st_N}_* \:  \\ S_*^\lip(N) \to S_*^\lip(N) $ be the map given by
$$ \st_N(\sigma) := p_N \circ \st(\tilde \sigma) $$
where $\tilde \sigma$ is a lift of $\sigma$ in $\widetilde N$. Analogously we define $\st_M$. We have a well defined map:
$$ (\st_M, \st_N)_* \: C_*^{\lf, \lip}(M \times N) \to C_*^{\lf, \lip}(M \times N) \qquad \sigma \mapsto \left(\st_M(\pi_M\circ\sigma), \st_N(\pi_N\circ\sigma)\right). $$
For every simplex $\sigma \in S_*^\lip(M)$, let $h^M(\sigma)$ be a homotopy between $\st(\sigma)$ and $\sigma$ as in Theorem \ref{pseudostraight:thm}. Analogously with $N$. It is easily seen that the system of homotopies $(h^M(\sigma), h^N(\tau)$), for $(\sigma, \tau) \in S_*^\lip(M \times N) = S_*^\lip(M) \times S_*^\lip(N)$, satisfies conditions (1), (2), (3) of Lemma \ref{da chain top a chain homo pag 11 LS}. Therefore 
$$ (\st_M, \st_N)_* \: C_*^{\lf, \lip}(M \times N) \to C_*^{\lf, \lip}(M \times N) $$ 
is chain homotopic to the identity, norm non-increasing, and maps cycles to sparse cycles. 
Hence the conclusion follows.
\end{proof}

COMMENTATA APPLICAZIONE ISOMETRICA

\bibliographystyle{amsalpha}
\bibliography{biblio}

\end{document}